\newcommand{\R}{{\mathbf R}}
\newcommand{\Z}{{\mathbf Z}}
\newcommand{\N}{{\mathbf N}}
\newcommand    {\e}{{\mathbf x}}
\newcommand    {\by}{{\mathbf y}}
\newcommand    {\bs}{{\mathbf s}}
\newcommand    {\bz}{{\mathbf z}}
\newcommand    {\m}{{\rm sign}}
\def\<{\langle}
\def\>{\rangle}
\newtheorem{theorem}{Theorem}[section]
\newtheorem{corollary}[theorem]{Corollary}
\newtheorem{lemma}[theorem]{Lemma}
\newtheorem{proposition}[theorem]{Proposition}
\theoremstyle{definition}
\newtheorem{definition}[theorem]{Definition}
\newtheorem{example}[theorem]{Example}
\newtheorem{remark}[theorem]{Remark}
\newtheorem{problem}[theorem]{Problem}
\numberwithin{equation}{section}
\title[Discreteness of the spectrum for SChr\"odinger operator]{Conditions of discreteness of the spectrum for
			 Schr\"odinger operator and some optimization problems for capacity and measures}
\author[L. Zelenko]{Leonid Zelenko}
\address[L. Zelenko]{Department of Mathematics \\
	University of Haifa  \\
	Haifa 31905  \\
	Israel}
\email{\tt zelenko@math.haifa.ac.il}
\keywords{Schr\"odinger operator,
	discreteness of the spectrum,  base polyhedron of a submodular set function, optimal covering problem, rearrangement of a function.}
\subjclass[2010]{Primary 47F05, 47B25, 47D08, \\35P05; Secondary 81Q10, 90C10, 90C27, 91A12}
\begin{document}

	\begin{abstract}
		For the the Schr\"odinger operator $H=-\Delta+ V(\e)\cdot$, acting in the space $L_2(\R^d)\,(d\ge 3)$, with $V(\e)\ge 0$ and $V(\cdot)\in L_{1,loc}(\R^d)$, we obtain some constructive conditions
		for discreteness of its spectrum. Basing on the Mazya-Shubin criterion for discreteness of the spectrum of $H$  and using the isocapacity inequality and the concept of base polyhedron 
		for the harmonic capacity, we have estimated from below 
		the cost functional of an optimization problem, involved in this criterion, replacing a submodular constrain (in terms of the harmonic capacity) by a weaker but additive constrain (in terms of a measure). By this way we obtain  
		an optimization problem, which can be considered as an infinite-dimensional analogue of the optimal covering problem. We have solved this problem for the case of a non-atomic measure. This approach 
		enables us to obtain for the operator $H$  some sufficient conditions for discreteness of its spectrum in terms of non-increasing rearrangements, with respect to measures from the base polyhedron, 
		for some functions connected with the  potential $V(\e)$. We construct some counterexamples, which permit to compare our results between themselves and with results of other authors.
	\end{abstract}

	\maketitle
	
	
	\section{Introduction} \label{sec:introduction}
	\setcounter{equation}{0}
	
	In the present paper we consider the Schr\"odinger operator $H=-\Delta+ V(\e)\cdot$,
	acting in the space $L_2(\R^d)\,(d\ge 2)$. In what follows we
	assume that $d\ge 3$, $V(\e)\ge 0$ and $V(\cdot)\in L_{1,loc}(\R^d)$. Physically $V(\e)$ is the 
	potential of an external electric field.

	A wide literature is dedicated to investigation 
	of the spectrum for this operator. In particular, the case of discrete spectrum is interesting where it consists only of isolated eigenvalues of $H$
	with finite multiplicities. With the point of view of Quantum Mechanics in this case an electron can move only in a compact neighborhood of an atom kernel along an orbit 
	from a discrete collection of orbits. The simplest condition for  discreteness
	of the spectrum is: $\lim_{|\e|\rightarrow\infty}V(\e)=\infty$ (\cite{6}).  Physically this means that there is a potential barrier at infinity. 
	The first sufficient and necessary condition for discreteness of the spectrum to $H$ in the case of a semi-bounded below potential was obtained by A.M. Molchanov \cite{14}. 
	For the one-dimensional case ($d=1$) this criterion has the form: for any $r>0$	$\lim_{|x|\rightarrow\infty}\int_x^{x+r}V(t)\,\mathrm{d}t=\infty$. But in the multi-dimensional case ($d\ge 2$)
	the Molchaniv's criterion is more complicated, because it involves so called ``negligible'' subsets of $\R^d$, i.e., ones having a small harmonic capacity.
	In the paper \cite{13} V. Mazya and M. Shubin have improved significantly the Molchanov's result. In order to formulate the result from \cite{13}, let us introduce some notations. 
	Consider in $\R^d$ an open domain $\mathcal{G}$ satisfying the conditions:
	
	(a) $\mathcal{G}$ is bounded and star-shaped with respect to any point of an open ball $B_\rho(0)\,(\rho>0)$ contained in $\mathcal{G}$;
	
	(b) $\mathrm{diam}(\mathcal{G})=2$.
	
	As it was noticed in \cite{13}, condition (a) implies that $\mathcal{G}$ can be represented in the form
	\begin{equation}\label{formofcalG}
	\mathcal{G}=\{\e\in\R^d:\,\e=r\omega,\, |\omega|=1,\,0\le r<r(\omega)\}, 
	\end{equation} 
	where $r(\omega)$ is a positive Lipschitz function on the standard unit sphere $S^{d-1}\subset\R^d$. For $r>0$
	and $\by\in\R^d$ denote 
	\begin{equation}\label{dfcalGry}
	\mathcal{G}_r(\by):=\{\e\in\R^d:\,r^{-1}\e\in\mathcal{G}\}+\{\by \}.
	\end{equation}
	Denote by
	$\mathcal{N}_{\gamma}(\by,r)\;(\gamma\in(0,1))$ the set of all
	compact sets $F\subseteq\bar{\mathcal{G}}_r(\by)$ satisfying the condition
	\begin{equation}\label{defNcap}
	\mathrm{cap}(F)\le\gamma\,\mathrm{cap}(\bar{\mathcal G}_r(\by)),
	\end{equation}
	where $\mathrm{cap}(F)$ is the harmonic capacity.
	\begin{theorem}\cite{13}\label{thMazSh}
	The spectrum of the operator $H$ is discrete, if  
	for some $r_0>0$ and for any $r\in(0,r_0)$ the condition
	\begin{equation}\label{cndmolch1}
	\lim_{\by\rightarrow\infty}\inf_{F\in\mathcal{N}_{\gamma(r)}(\by,r)}\int_{{\mathcal G}_r(\by)\setminus
		F}V(\e)\, \mathrm{d}\e=\infty,
	\end{equation}
	is satisfied, where 
	\begin{equation}\label{cndgammar}
	\forall\,r\in(0,\,r_0):\;\gamma(r)\in(0,1)\quad\mathrm{and}\quad
	\limsup_{r\downarrow 0} r^{-2}\gamma(r)=\infty,
	\end{equation}	
	\end{theorem}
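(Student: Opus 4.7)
The plan is to prove discreteness by verifying the standard quasi-compactness criterion for the form of $H$. Writing $h[u]=\int_{\R^d}(|\nabla u|^2+V|u|^2)\,d\e$, discreteness of the spectrum is equivalent to compactness of the form-domain embedding $\Dc[h]\hookrightarrow L^2(\R^d)$; local compactness on every bounded set follows from Rellich--Kondrachov since $V\in L_{1,\mathrm{loc}}$, so matters reduce to the tail estimate
\[
\forall\,\epsilon>0\ \exists\,R>0:\quad \int_{\R^d\setminus B_R(0)}|u|^2\,d\e\le\epsilon\, h[u],\quad u\in C_0^\infty(\R^d).
\]

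I would establish this tail estimate by covering $\R^d\setminus B_R(0)$ with translates $\{\Gc_r(\by_j)\}_j$ of the fixed cell having uniformly bounded overlap (feasible because $\Gc$ is bounded and star-shaped) and applying a capacitary local Poincar\'e inequality of Mazya type on each cell. Schematically, for any $u\in W^{1,2}(\Gc_r(\by))$ and any compact $F\in\Nc_{\gamma(r)}(\by,r)$, the key estimate reads
\[
\int_{\Gc_r(\by)}|u|^2\,d\e\le C\,\frac{r^d}{\mathrm{cap}(\bar\Gc_r(\by))-\mathrm{cap}(F)}\int_{\Gc_r(\by)}|\nabla u|^2\,d\e+C\int_{\Gc_r(\by)\setminus F}|u|^2\,d\e,
\]
which, via the scaling $\mathrm{cap}(\bar\Gc_r(\by))\asymp r^{d-2}$ and the constraint \eqref{defNcap}, yields a gradient coefficient of order $r^2/(1-\gamma(r))$. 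To turn the remainder $\int_{\Gc_r(\by)\setminus F}|u|^2\,d\e$ into a quantity dominated by $\int_{\Gc_r(\by)} V|u|^2\,d\e$, I would choose $F=F(u)$ adaptively as a sublevel set of $|u|$, using a truncation/capacitary-strong-type step to match the constraint \eqref{defNcap} while keeping $|u|^2$ small enough on $\Gc_r(\by)\setminus F(u)$ to conclude
\[
\int_{\Gc_r(\by)\setminus F(u)}|u|^2\,d\e\le \mu(r,\by)^{-1}\int_{\Gc_r(\by)}V|u|^2\,d\e,\ \ \mu(r,\by):=\inf_{F\in\Nc_{\gamma(r)}(\by,r)}\int_{\Gc_r(\by)\setminus F}V\,d\e.
\]

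Summing the local inequalities over the cover and using \eqref{cndgammar} to pick $r$ making the gradient coefficient below $\epsilon/2$, then $R$ so that $\mu(r,\by_j)^{-1}\le\epsilon/2$ for $|\by_j|>R$ (by \eqref{cndmolch1}), produces the tail bound for arbitrary $\epsilon$. The main obstacle is the adaptive choice of $F(u)$: one must extract from an arbitrary $u\in W^{1,2}(\Gc_r(\by))$ a single sublevel set simultaneously satisfying the capacity bound $\mathrm{cap}(F(u))\le\gamma(r)\mathrm{cap}(\bar\Gc_r(\by))$ and the pointwise/integral control outside. This capacitary strong-type calibration is the technical heart of the Mazya--Shubin mechanism, and the condition $\limsup_{r\downarrow 0}r^{-2}\gamma(r)=\infty$ is precisely what makes the gradient and potential contributions simultaneously absorbable into $\epsilon\,h[u]$.
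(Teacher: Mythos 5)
The paper does not prove Theorem~\ref{thMazSh}; it is imported verbatim from Mazya--Shubin \cite{13} and used as a black box throughout, so there is no in-paper proof to compare your argument against. Taken on its own terms, your outline does describe the known Mazya--Shubin mechanism at the level of a road map: reduce discreteness to a uniform tail estimate for the form, tile the exterior by scaled copies of $\Gc$, apply a capacitary Poincar\'e inequality on each cell, and absorb the gradient contribution using $\limsup_{r\downarrow 0}r^{-2}\gamma(r)=\infty$.

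The genuine gap is exactly the step you flag as ``the main obstacle'': the adaptive choice $F=F(u)$ with the simultaneous requirements $\mathrm{cap}(F(u))\le\gamma(r)\,\mathrm{cap}(\bar\Gc_r(\by))$ and control of $\int_{\Gc_r(\by)\setminus F(u)}|u|^2$ by $\mu(r,\by)^{-1}\int V|u|^2$. In \cite{13} this is not a one-line truncation but a separate capacitary strong-type lemma requiring a careful case distinction on the capacities of the level sets of $u$ (if a level set has large capacity one invokes the capacitary Poincar\'e inequality directly; if it has small capacity it becomes the negligible set), and it is precisely there that the sharp constant in front of the gradient term -- and hence the role of the exponent $2$ in \eqref{cndgammar} -- is established. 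Also note the form of your intermediate inequality is not quite the one actually used: Mazya's Poincar\'e inequality of the type $\int_G|u|^2\lesssim(\mathrm{mes}_d G/\mathrm{cap}(F))\int_G|\nabla u|^2$ presupposes $u=0$ on $F$, whereas your $F$ is the complementary negligible set; reconciling these two roles is what the strong-type lemma does. As written, your proposal identifies the correct strategy and the correct bottleneck, but leaves that bottleneck -- the heart of the theorem -- unresolved, so it is a proof sketch rather than a proof.
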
	
In \cite{13} also a necessary condition for discreteness of the spectrum was obtained, which is close to sufficient one. 

Notice that condition \eqref{cndmolch1} of Theorem \ref{thMazSh} is hardly verifiable, because in order to test it, one needs to solve a difficult optimization problem, whose cost functional is the set function  $\mathcal{I}(F)=\int_{{\mathcal G}_r(\by)\setminus F}V(\e)\, \mathrm{d}\e$ and the constrain 
$F\in\mathcal{N}_{\gamma(r)}(\by,r)$ is submodular (because``cap'' is a submodular set function (definition \eqref{submod}). In the present paper we estimate the cost functional from below using the isocapacity inequality \eqref{isocapineq} and replacing $F\in\mathcal{N}_{\gamma(r)}(\by,r)$ by a weaker but additive constrain. To this end we also use the concept of base polyhedron for the harmonic capacity (definition \eqref{defcore}). By this way on the base of  Theorem \ref{thMazSh} we obtain some sufficient conditions for discreteness of the spectrum in terms of measures, which permit a reformulation in terms of non-increasing rearrangements of some functions connected with the potential $V(\e)$.

Let us notice that in the papers \cite{1}, \cite{20}, \cite{9} and \cite{4} some constructive sufficient conditions for discreteness of the spectrum for $H$ have been found. In \cite{9} also the case of a unbounded below potential was studied and in \cite{4} also the case of a matrix-valued potential was considered. For a scalar non-negative potential  $V(\e)$
 the result of \cite{4} is most general among all these conditions, but our results are more general (see Remarks \ref{remGMD}, \ref{remonexampl} and Example \ref{ex4}). 

Let us make an overview of our results. Theorem \ref{thmeasinstcapcalG} is obtained by a direct use of the isocapacity inequality and yields a sufficient condition of discreteness of the spectrum for $H$ in terms of an optimization problem involving Lebesgue measure instead of the capacity.  This problem can be considered as an infinite-dimensional analogue of the optimal covering problem \cite{15}. We formulate it in a more general form - for any non-negative measure (Problem \ref{extrprobl}), with the purpose to use its solution for derivation of results involving the base polyhedron of harmonic capacity. Theorem \ref{prsolextrprob} solves Problem \ref{extrprobl} for a {\it non-atomic} measure (\cite{8}). In Proposition \ref{lmJR} we obtain a two-sided estimate for the solution of Problem \ref{extrprobl}, obtained 
in Theorem \ref{prsolextrprob}, via a non-increasing rearrangement of the function $W(\e)$ taking part in the cost functional. This estimate enables us to formulate most of our conditions of discreteness of the spectrum for $H$ in terms of non-increasing rearrangements. In such formulations these conditions are easier verifiable than ones obtained by a direct use of Theorem \ref{prsolextrprob}. Theorem \ref{lmmeasinstcapnew} yields a condition for discreteness of the spectrum based immediately on Theorems \ref{thmeasinstcapcalG}, \ref{prsolextrprob} and Theorem 
\ref{lmmeasinstcap1} is reformulation of it in terms of the rearrangement of the potential $V(\e)$ 
with respect to Lebesgue measure.

All the rest our results are based on the concept of base polyhedron for harmonic capacity. 
Condition for discreteness of the spectrum given in Theorem \ref{thusecore1} involves the optimization problem, formulated in Problem \ref{extrprobl}, with measures from the base polyhedron.
Condition for discreteness of the spectrum given in Theorem \ref{thusecore2} involves only  measures from the base polyhedron, which are equivalent to Lebesgue measure, and it is formulated in terms of non-increasing rearrangements, with respect to these measures, for products of $V(\e)$ and the densities of Lebesgue measure with respect to them. Corollary \ref{thusecoredstort} is based on Theorem \ref{thusecore2} and Proposition \ref{lmdescribedens}, where a constructive description of a part ${\mathrm M}_f(\by,r)$ of the base polyhedron is obtained
(see also Definition \ref{dfMacry}). In turn, Proposition \ref{lmdescribedens} rests on  isocapacity inequality \eqref{isocapineq} and the description, obtained in \cite{2} for the core of convex distortions of probability measures. Theorem \ref{thlogmdenspolyhed}, based on Corollary \ref{thusecoredstort}, yields a easier verifiable  condition for discreteness of the spectrum for $H$
in terms of rearrangements of the potential $V(\e)$ with respect to Lebesgue measure on cubes from $m$-adic partitions of unit cubes. In the formulation of Theorem \ref{thlogmdenspolyhed} we use our concept of  $(\log_m,\,\theta)$- dense system of subsets of a unit cube (Definition \ref{dfdenslogmtetpart1}), which permits to place cubes from $m$-adic partitions by their sizes. We need this placement because, roughly speaking, on these cubes  
the lower bound of relative Lebesgue measure of the sets of points, where $V(\e)$ tends to infinity as the cubes go to infinity, depends on their sizes. The property of $(\log_m,\,\theta)$-density ensures that on balls with any centers and arbitrary small radius a similar dependence on the radius is conserved for the lower bound of relative measure of the sets, mentioned above.  This circumstance enables us to use Corollary \ref{thusecoredstort} in the proof of Theorem \ref{thlogmdenspolyhed}.

The paper is organized as follows. After this Introduction, in Section \ref{sec:prel} (Preliminaries)  we introduce some concepts and notations used in the paper. In Section
\ref{sec:meinres} we formulate the main results, in Section \ref{sec:proofmainres} 
we prove them. In Section \ref{sec:examples} we construct examples of $(\log_m,\,\theta)$- dense systems of sets and some counterexamples, which permit to compare our results between themselves and with results of other authors. Section \ref{sec:appendix} is Appendix, where we formulate and prove Proposition \ref{prcore} on existence of the non-empty base polyhedron for harmonic capacity 
and Proposition \ref{lmdescribedens} mentioned above.

 \section{Preliminaries} \label{sec:prel}
\setcounter{equation}{0}

Let us come to agreement on some notations and terminology. Let $\Omega$ be an open and bounded domain in $R^d$. We denote by
$\Sigma_B(\bar\Omega)$ the $\sigma$-algebra of all
Borel subsets of $\bar\Omega$. By $\Sigma_L(\bar\Omega)$ we denote the $\sigma$-algebra of all Lebesgue measurable subsets of $\bar\Omega$, i.e., it is the Lebesgue completion of $\Sigma_B(\bar\Omega)$ by the Lebesgue measure $\mathrm{mes}_d$. If $(X,\Sigma,\mu)$ is a measure space, we call all sets from $\Sigma$ $\,\mu$-{\it measurable} and if $X=\bar\Omega\subseteq\R^d$, $\mu=\mathrm{mes}_d$ and $\Sigma=\Sigma_L(\bar\Omega)$, we simply call them  measurable. If a measure is absolutely continuous with respect to Lebesgue measure, we simply call it absolutely continuous. By $B_r(\by)$ we denote the open ball in $\R^d$ whose radius and center are $r>0$ and $\by$.

Let us recall the definition of the {\it harmonic (or Newtonian) capacity}\footnote[1]{In the Russian literature it is often called  \it{Wiener capacity.}}  of a compact set
$E\subset\R^d$ (\cite{13}):
\begin{eqnarray}\label{dfWincap}
&&\mathrm{cap}(E):=\inf\Big(\big\{\int_{\R^d}|\nabla u(\e)|^2\,
\mathrm{d}\e\,:\;u\in C^\infty(\R^d), \;u\ge
1\;\mathrm{on}\;E,\\
&&u(\e)\rightarrow 0\;\mathrm{as}\;|\e|\rightarrow\infty \big\}\Big).\nonumber
\end{eqnarray}
It is known \cite{3} that the set function ``cap'' can be extended in a suitable manner from the set of all compact subsets of the space $\R^d$ to the set of all Borel subsets of it.
 It is known  (\cite{12}, \cite{11}) that the set function ``cap''
is {\it submodular} (concave) in the sense that for any pair of sets $A,\,B\in\Sigma_B(\bar\Omega)$
\begin{equation}\label{submod}
\mathrm{cap}(A\cup B)+\mathrm{cap}(A\cap
B)\le\mathrm{cap}(A)+\mathrm{cap}(B).
\end{equation}
and the {\it isocapacity inequality} is valid:
\begin{equation}\label{isocapineq}
\forall\,F\in\Sigma_B(\bar\Omega):\quad\mathrm{mes}_d(F)\le
c_d\,(\mathrm{cap}(F))^{d/(d-2)}
\end{equation}
with $c_d=\big(d(d-2)(\mathrm{mes}_d(B_1(0)))^{2/d}\big)^{-d/(d-2)}$, which  comes as
identity if $F$ is a closed ball.

By $M(\bar\Omega)$ denote the set of all  additive set functions on
$\Sigma_B(\bar\Omega))$ (we shall call them briefly ``measures'') and by $M^+(\bar\Omega)$ denote the set of
all non-negative measures from $M(\bar\Omega))$. In the theory of
coalition games (\cite{18} \cite{16}, \cite{10}) the concept of the
{\it core} of a game is used. 
Following to
\cite{5}, we define for the harmonic capacity on $\bar\Omega$ a dual concept of the {\it base
	polyhedron} $\mathrm{BP}(\bar\Omega)$: 
\begin{eqnarray}\label{defcore}
&&\mathrm{BP}(\bar\Omega):=\{\mu\in M^+(\bar\Omega)):\;\\
&&\mu(A)\le \mathrm{cap}(A)\;\mathrm{for\; all}\;
A\in\Sigma_B(\bar\Omega))\;\mathrm{and}\;\mu(\bar\Omega)=\mathrm{cap}(\bar\Omega)\}.\nonumber
\end{eqnarray}
This set is nonempty, convex, and compact in the weak*-topology
(Proposition \ref{prcore}). If $\Omega=\mathcal{G}_r(\by)$ (see \eqref{dfcalGry}. \eqref{formofcalG}), we shall
write briefly $M^+(\by,r)$ and $\mathrm{BP}(\by,r)$. Denote by
$\mathrm{BP}_{eq}(\bar\Omega)$ the subset of 
$\mathrm{BP}(\bar\Omega)$ 
consisting of Radon measures which are equivalent to
Lebesgue measure $\mathrm{mes}_d$ (\cite{7}).

Let us recall the concept
of {\it measure preserving mapping} (\cite{2}, Definition
2). 
\begin{definition}\label{dfmeaspres}
Let $(\Omega,\,\mathcal{A},\,Q)$ be a  probability space
and $\lambda$ be Lebesgue measure on $[0,1]$. A measurable
function $s:\,\Omega\rightarrow [0,1]$ is called {\it measure
	preserving} , if $\lambda(B)=Q\big(s^{-1}(B)\big)$ for any Borel
subset of $[0,1]$. Denote by $\mathcal{S}(\Omega,\,Q)$ the
collection of all such functions.	
\end{definition}
 
Suppose that $\Omega=\bar
B_r(\by)$, $\mathcal{A}=\Sigma_L(\bar B_r(\by))$ 
and $Q$ is the normalized
Lebesgue measure $m_{d,r}$ on $\bar B_r(\by)$, defined by:
\begin{equation}\label{dfmdr}
m_{d,r}(A):=\frac{\mathrm{mes}_d(A)}{\mathrm{mes}_d(B_r(\by))}\quad(A\in\Sigma_L(\bar B_r(\by))).
\end{equation}
 We shall need the following set of measures, which is a part of $\mathrm{BP}_{eq}(\bar
B_r(\by))$ (Proposition \ref{lmdescribedens}):
\begin{definition}\label{dfMacry}
	Consider the function $f(t)=t^{(d-2)/d}\;(t\in[0,1])$ and denote by ${\mathrm M}_f(\by,r)$ the set of absolute
	continuous measures on $\bar B_r(\by)$, whose densities run over
	the following convex set:
	\begin{equation}\label{dfcalCo}
	{\mathcal Co}\,(\by,\,r):=\mathrm{cap}(\bar
	B_r(0))\cdot\,\overline{\mathrm{co}}\Big(\{f^\prime\,\circ\,s:\,s\in\mathcal{S}(\bar
	B_r(\by),\,m_{d,r})\}\Big),
	\end{equation}
	where  $``\mathrm{co}''$ denotes the convex
	hull and the closure is taken for the $L_1(\bar B_r(\by),\\m_{d,r})$
	topology.
\end{definition}
\begin{remark}\label{remoncubes}
	 We shall consider the unit cube $Q=[-1,\,1]^d\subset\R^d$ and the dilation of it $Q_r:=r\cdot Q\,(r>0)$ and the translation of the latter 
	$Q_r(\by):=Q_r+\{\by \}$.	
\end{remark}
Denote by $[x]$ the integer part of a real number $x$. In the sequel we need the following concepts:
\begin{definition}\label{dfregpar1}
	We call a subset of $\R^d$ a {\it regular parallelepiped}, if it has the form $\times_{k=1}^d[a_k,\,b_k]$.
\end{definition}
\begin{definition}\label{dfdenslogmtetpart1}
	Suppose that $m>1$	and $\theta\in(0,1)$. A sequence $\{D_n\}_{n=1}^\infty$ of subsets of a cube $Q_1(\by)$ is said to be a {\it $(\log_m,\,\theta)$- dense system} in $Q_1(\by)$, if 
	
	(a) each $D_n$ is a finite union of regular parallelepipeds;
	
	(b) for any 
	cube $Q_r(\bz)\subseteq Q_1(\by)$ with $r\in\big(0,\,\min\{1,\,\frac{1}{\theta m^2}\}\big)$ there is  
	$j\in\{1,2,\dots, \big[\log_m\big(\frac{1}{\theta r}\big)\big]\}$
	such that for some regular parallelepiped $\Pi\subseteq D_j$ 
	there is a cube $Q_{\theta r}(\bs)$, contained in $\Pi\cap Q_r(\bz)$.
\end{definition}

\section{Main results} \label{sec:meinres}
\setcounter{equation}{0}

Denote by $\mathcal{M}_{\gamma}(\by,r)\;(\gamma\in(0,1))$ the
collection of all Borel sets $F\subseteq\mathcal{G}_r(\by)$
satisfying the condition
$\mathrm{mes}_d(F)\le\gamma\,\mathrm{mes}_d(\mathcal{G}_r(\by))$, where the domain $\mathcal{G}_r(\by)$ is defined by 
\eqref{dfcalGry}, \eqref{formofcalG}. A direct use of isocapacity inequality \eqref{isocapineq}
 leads to the following claim:
\begin{theorem}\label{thmeasinstcapcalG}
	Suppose that for some $r_0>0$ and any $r\in(0,r_0)$ the condition
	\begin{equation}\label{cndmeasinstcapcalG}
	\lim_{|\by|\rightarrow\infty}\inf_{F\in\mathcal{M}_{\tilde\gamma(r)}(\by,r)}\int_{\mathcal{G}_r(\by)\setminus
		F}V(\e)\, \mathrm{d}\e=\infty
	\end{equation}
	is satisfied with a function $\tilde\gamma(r)$ satisfying the condition 
	\begin{equation}\label{cndtildgam}
	\forall\,r\in(0,\,r_0):\;\tilde\gamma(r)\in(0,1)\quad\mathrm{and}\quad\limsup_{r\downarrow
		0}\,r^{-2(d-2)/d}\,\tilde\gamma(r)=\infty.
	\end{equation}
	Then the spectrum of the operator $H=-\Delta+V(\e)$ is discrete.
\end{theorem}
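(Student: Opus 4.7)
The plan is to reduce hypothesis \eqref{cndmeasinstcapcalG} to condition \eqref{cndmolch1} of the Mazya--Shubin criterion and then invoke Theorem \ref{thMazSh}. The key observation is that the isocapacity inequality \eqref{isocapineq} converts a capacity bound into a Lebesgue-measure bound, so every compact set lying in $\mathcal{N}_{\gamma(r)}(\by,r)$ will automatically belong to $\mathcal{M}_{\tilde\gamma(r)}(\by,r)$ for a suitably chosen $\gamma(r)$. Since the infimum of the non-negative functional $\int_{\mathcal{G}_r(\by)\setminus F}V(\e)\,\mathrm{d}\e$ over a smaller collection of $F$'s is at least the infimum over a larger collection, this inclusion together with \eqref{cndmeasinstcapcalG} will yield \eqref{cndmolch1}.

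To make the inclusion quantitative, I will exploit the scaling identities
\[
\mathrm{cap}(\bar{\mathcal{G}}_r(\by))=r^{d-2}\,\mathrm{cap}(\bar{\mathcal{G}}),\qquad \mathrm{mes}_d(\mathcal{G}_r(\by))=r^d\,\mathrm{mes}_d(\mathcal{G}),
\]
so that for $F\in\mathcal{N}_{\gamma(r)}(\by,r)$ the isocapacity inequality gives
\[
\mathrm{mes}_d(F)\le c_d\,(\mathrm{cap}(F))^{d/(d-2)}\le c_d\,\gamma(r)^{d/(d-2)}\,r^d\,(\mathrm{cap}(\bar{\mathcal{G}}))^{d/(d-2)}.
\]
Comparing with $\tilde\gamma(r)\,\mathrm{mes}_d(\mathcal{G}_r(\by))=\tilde\gamma(r)\,r^d\,\mathrm{mes}_d(\mathcal{G})$, the inclusion $\mathcal{N}_{\gamma(r)}(\by,r)\subseteq\mathcal{M}_{\tilde\gamma(r)}(\by,r)$ holds as soon as $\gamma(r)\le C'\,\tilde\gamma(r)^{(d-2)/d}$ for an explicit constant $C'=C'(\mathcal{G},d)$. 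Accordingly, I set
\[
\gamma(r):=\min\bigl\{C'\,\tilde\gamma(r)^{(d-2)/d},\;1/2\bigr\},
\]
which lies in $(0,1)$ for every $r\in(0,r_0)$.

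It remains to verify the second part of \eqref{cndgammar}. Pick a sequence $r_n\downarrow 0$ along which $r_n^{-2(d-2)/d}\tilde\gamma(r_n)\to\infty$, as permitted by \eqref{cndtildgam}. Setting $\alpha=(d-2)/d$, a short manipulation of exponents yields
\[
r_n^{-2}\tilde\gamma(r_n)^{\alpha}=r_n^{-8(d-1)/d^2}\bigl(r_n^{-2\alpha}\tilde\gamma(r_n)\bigr)^{\alpha},
\]
and both factors tend to $\infty$, since the exponent $-8(d-1)/d^2$ is strictly negative for $d\ge 3$. Hence $\limsup_{r\downarrow 0}r^{-2}\gamma(r)=\infty$, so Theorem \ref{thMazSh} applies and delivers discreteness of the spectrum of $H$. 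The only mildly delicate step is the exponent bookkeeping in this last paragraph: the passage from the power $2(d-2)/d$ in \eqref{cndtildgam} to the power $2$ in \eqref{cndgammar} hinges precisely on the Sobolev exponent $d/(d-2)$ appearing in \eqref{isocapineq}, and this is a compatibility check rather than a genuine obstacle.
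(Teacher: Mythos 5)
Your proposal is correct and follows essentially the same route as the paper: translate the capacity constraint on compact $F$ into a Lebesgue-measure constraint via the isocapacity inequality, thereby deriving the inclusion $\mathcal{N}_{\gamma(r)}(\by,r)\subseteq\mathcal{M}_{\tilde\gamma(r)}(\by,r)$ for $\gamma(r)\asymp\tilde\gamma(r)^{(d-2)/d}$, and then invoke Theorem~\ref{thMazSh}. The only cosmetic difference is that the paper normalizes against the circumscribed ball $B_{\bar r\cdot r}(\by)$ (so that $\gamma(r)=(\tilde\gamma(r)/G)^{(d-2)/d}$ lies automatically in $(0,1)$ because $G\ge 1$), whereas you use the scaling identities for capacity and Lebesgue measure directly and cap $\gamma$ at $1/2$; your exponent bookkeeping, $2-2\alpha^2=8(d-1)/d^2$ with $\alpha=(d-2)/d$, is exactly the check the paper leaves implicit when asserting that $\gamma$ satisfies \eqref{cndgammar}.
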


In order to represent the expression in left hand side of \eqref{cndmeasinstcapcalG} in a more constructive form,  
we need to solve the following optimization problem:
\begin{problem}\label{extrprobl}
	Let $(X,\,\Sigma,\,\mu)$ be a measure space with a non-negative measure
	 $\mu$  and $W(x)$ be a
	non-negative function defined on $X$ and belonging to
	$L_1(X,\,\mu)$. For $t\in(0,\,\mu(X))$ consider the
	collection $\mathcal{E}(t,X,\,\mu)$ of all $\mu$-measurable
	sets $E\subseteq X$ such that $\mu(E)\ge t$. The goal is to find
	the quantity
	\begin{equation}\label{dfIVOm}
	I_W(t,\,X,\,\mu)=\inf_{E\in\mathcal{E}(t,X,\,\mu)}\int_E W(x)\,
	\mu(\mathrm{d}x).
	\end{equation}
\end{problem}

In the formulation of next claim we shall use the following notations.
For the measure space and the function $W(x)$, introduced in Problem \ref{extrprobl},
consider the quantity:
\begin{eqnarray}\label{dfJVOm}
&&J_W(t,\,X,\mu):=\int_{\mathcal{K}_W^-\big(t,\,X,\,\mu\big)}W(x)\,
\mu(\mathrm{d}x)+\\
&&\big(t-\kappa_W^-(t,\,X,\,\mu)\big) W_\star(t,\,X,\,\mu),\nonumber\end{eqnarray}
where $W_\star(t,\,X,\,\mu)$ is the non-decreasing rearrangement of the function\\ $W(x)$, i.e.,
\begin{equation}\label{dfstOm}
W_\star(t,\,X,\,\mu):=\sup\{s>0:\;\lambda_\star(s,\,W,\,X,\,\mu)<
t\}\quad (t>0)
\end{equation}
with
\begin{equation}\label{dfFVOm}
\lambda_\star(s,\,W,\,X,\,\mu)=\mu({\mathcal L} _\star(s,W,X)),
\end{equation}
\begin{equation}\label{dfKVOm}
{\mathcal L} _\star(s,W,X)=\{x\in X:\;W(x)\le s\}.
\end{equation}
Furthermore,
\begin{equation}\label{dfKWt}
\mathcal{K}_W^-\big(t,\,X,\,\mu\big)={\mathcal L} _\star(s^-,W,X)\vert_{s=W_\star(t,\,X,\,\mu)},
\end{equation}
and
\begin{equation}\label{dfFVommin}
\kappa_W^-(t,\,X,\,\mu)=\mu\big(\mathcal{K}_W^-(t,\,X,\,\mu)\big),
\end{equation}
where
\begin{equation}\label{dfKVsminOm}
{\mathcal L} _\star(s^-,W,X)=\bigcup_{u<s}{\mathcal L} _\star(u,W,X)=\{x\in
X:\;W(x)< s\},
\end{equation}

The following claim solves Problem \ref{extrprobl} for a non-atomic  measure:
\begin{theorem}\label{prsolextrprob}
	 Suppose that, in addition to  conditions of Problem \ref{extrprobl}, the measure 
	$\mu$ is non-atomic. Then
	
	(i) for any $t\in(0,\,\mu(X))$ there exists a $\mu$-measurable set $\tilde{\mathcal K}\subseteq X$ such that 
	\begin{equation}\label{estmesKV}
	\mu(\tilde{\mathcal K})=t,
	\end{equation}
	for the quantity $J_W(t,\,X,\,\mu)$, defined by
	\eqref{dfJVOm}-\eqref{dfFVommin}, the representation
	\begin{equation}\label{reprJV}
	J_W\big(t,\,X,\,\mu\big)= \int_{\tilde{\mathcal
			K}}W(x)\, \mu(\mathrm{d}x)
	\end{equation}
	is valid and
	\begin{eqnarray}\label{propKV}
	&&\forall\;x\in\tilde{\mathcal K}:\quad W(x)\le
	W_\star(t,\,X,\,\mu),\\
	&&\forall\;x\in X\setminus\tilde{\mathcal K}:\quad W(x)\ge W_\star(t,\,X,\,\mu);\nonumber
	\end{eqnarray}
	
	(ii) the equality
	\begin{equation}\label{IeqJ}
	I_W(t,\,X,\,\mu)=J_W(t,\,X,\,\mu)
	\end{equation}
	is valid.
	\end{theorem}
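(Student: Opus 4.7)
The statement is the classical ``bathtub principle'' for non‑atomic measures, so the strategy is the familiar one: build the optimal set as the strict sublevel set $\{W<W_\star\}$ topped up by a measurable slice of the level set $\{W=W_\star\}$, and then use a cut‑and‑paste argument to show optimality.

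For part (i), write $c:=W_\star(t,X,\mu)$. From the definition \eqref{dfstOm} one checks that for every $s<c$ one has $\lambda_\star(s,W,X,\mu)<t$, and for every $s>c$ one has $\lambda_\star(s,W,X,\mu)\ge t$. Taking $s\uparrow c$ and using monotone convergence on the increasing union of sets $\{W\le s\}$ gives $\kappa_W^-(t,X,\mu)=\mu\bigl(\{W<c\}\bigr)\le t$; taking $s\downarrow c$ and using right‑continuity of $s\mapsto\mu(\{W\le s\})$ gives $\mu(\{W\le c\})\ge t$. Hence the level set $L:=\{W=c\}$ satisfies $\mu(L)\ge t-\kappa_W^-(t,X,\mu)\ge 0$. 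Here the non‑atomicity hypothesis enters (this is the key technical input): by the Sierpi\'nski/Lyapunov theorem on non‑atomic measures, for every value $\alpha\in[0,\mu(L)]$ there is a $\mu$‑measurable $B\subseteq L$ with $\mu(B)=\alpha$. Applying this with $\alpha=t-\kappa_W^-(t,X,\mu)$ and setting
\[
\tilde{\mathcal K}:=\mathcal{K}_W^-(t,X,\mu)\,\cup\,B
\]
yields $\mu(\tilde{\mathcal K})=t$, i.e.\ \eqref{estmesKV}. Because $W<c$ on $\mathcal{K}_W^-$ and $W=c$ on $B$, the first inequality in \eqref{propKV} is immediate; because $W>c$ on $X\setminus\{W\le c\}$ and $W=c$ on $L\setminus B\subseteq X\setminus\tilde{\mathcal K}$, the second inequality in \eqref{propKV} holds as well. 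Splitting the integral $\int_{\tilde{\mathcal K}}W\,\mu(\mathrm{d}x)=\int_{\mathcal{K}_W^-}W\,\mu(\mathrm{d}x)+c\,\mu(B)$ reproduces exactly the definition \eqref{dfJVOm} of $J_W(t,X,\mu)$, proving \eqref{reprJV}.

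For part (ii), the inequality $I_W(t,X,\mu)\le J_W(t,X,\mu)$ is free, since the $\tilde{\mathcal K}$ from (i) lies in $\mathcal{E}(t,X,\mu)$. For the reverse inequality, let $E\in\mathcal{E}(t,X,\mu)$ be arbitrary and decompose
\[
\int_E W\,\mu(\mathrm{d}x)-\int_{\tilde{\mathcal K}} W\,\mu(\mathrm{d}x)=\int_{E\setminus\tilde{\mathcal K}} W\,\mu(\mathrm{d}x)-\int_{\tilde{\mathcal K}\setminus E} W\,\mu(\mathrm{d}x).
\]
By the layering property \eqref{propKV}, $W\ge c$ on $E\setminus\tilde{\mathcal K}$ and $W\le c$ on $\tilde{\mathcal K}\setminus E$, so the right‑hand side is at least $c\bigl(\mu(E\setminus\tilde{\mathcal K})-\mu(\tilde{\mathcal K}\setminus E)\bigr)$. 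Since $\mu(E)\ge t=\mu(\tilde{\mathcal K})$ implies $\mu(E\setminus\tilde{\mathcal K})\ge\mu(\tilde{\mathcal K}\setminus E)$, the difference is $\ge 0$, giving $\int_E W\,\mu(\mathrm{d}x)\ge J_W(t,X,\mu)$ and hence \eqref{IeqJ}.

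\textbf{Main obstacle.} The only non‑routine point is the construction of the slice $B\subseteq\{W=W_\star\}$ with prescribed measure $t-\kappa_W^-$; this is precisely where the non‑atomicity assumption is indispensable and where the theorem would fail in the general (e.g.\ atomic) case. Everything else is a standard rearrangement/cut‑and‑paste computation, and I would simply cite the Sierpi\'nski theorem (or a reference such as \cite{8}) for the non‑atomic slicing lemma rather than reprove it.
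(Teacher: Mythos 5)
Your proof is correct and follows essentially the same route as the paper's: construct $\tilde{\mathcal K}$ as the strict sublevel set $\mathcal{K}_W^-$ topped up by a Sierpi\'nski slice of the level set $\{W=W_\star\}$, verify the layering property \eqref{propKV}, and then use the cut-and-paste comparison $\int_E W-\int_{\tilde{\mathcal K}}W=\int_{E\setminus\tilde{\mathcal K}}W-\int_{\tilde{\mathcal K}\setminus E}W\ge 0$. The only cosmetic difference is in part (ii): you show directly that every competitor $E$ satisfies $\int_E W\,d\mu\ge J_W(t)$, whereas the paper phrases the same estimate through an $\epsilon$-approximating minimizer before letting $\epsilon\downarrow 0$; the two are logically interchangeable.
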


	In the next claim we shall obtain a two-sided estimate for the solution $J_W(t,\,X,\,\mu)$ of Problem \ref{extrprobl}  via a non-increasing rearrangement of the function $W(x)$ on  $X$. 
	This rearrangement is following:
	\begin{equation}\label{dfSVyrdel}
	\bar W^\star(t,X,\mu):=\sup\{s>0\,:\;\lambda^\star(s,\,W,\,X,\,\mu)\ge
	t\}\quad (t>0)\footnote[1]{In this notation we use the accent ``bar'', because in the literature ones denote by $W^\star$ the non-increasing rearrangement with the strong inequalities in its 
		definition}, 
	\end{equation}
	where
	\begin{eqnarray}\label{dfLVsry}
	&&\lambda^\star(s,\,W,\,X,\,\mu)=\mu(\mathcal{L}^\star(s,\,W,\,X,\,\mu)),\\
	&&\mathcal{L}^\star(s,\,W,\,X)=\{x\in X,:\; W(x)\ge s\}.\nonumber
	\end{eqnarray}

The promised claim is following:
	\begin{proposition}\label{lmJR}
	 	Suppose that, in addition to conditions of Problem \ref{extrprobl} and Theorem
		\ref{prsolextrprob}, the measure $\mu$ is finite.
		Then for $\theta>1$	and $t\in(0,\,\mu(X))$ the estimates
		\begin{equation}\label{estJR1}
		J_W(\mu(X)-t/\theta,X,\mu))\ge\frac{(\theta-1)t}{\theta}\bar W^\star(t,X,\mu),
		\end{equation}
		\begin{equation}\label{estJR2}
		J_W(\mu(X)-t,\,X,\mu)\le (\mu(X)-t) \bar W^\star(t,X,\mu)
		\end{equation}
		are valid.
	\end{proposition}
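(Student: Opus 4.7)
The plan is to apply Theorem \ref{prsolextrprob} in both estimates, pairing the optimizing set $\tilde{\mathcal K}$ that it provides against the superlevel set $\mathcal{L}^\star(s,W,X)$ of $W$ for $s$ just below $\bar W^\star(t,X,\mu)$. The second key ingredient, used only for the upper bound, is the comparison $W_\star(\mu(X)-t,X,\mu)\le \bar W^\star(t,X,\mu)$ between the non-decreasing and non-increasing rearrangements, which I would prove directly from the definitions.

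For the lower bound \eqref{estJR1}, I would invoke Theorem \ref{prsolextrprob}(i) with parameter $\tau=\mu(X)-t/\theta\in(0,\mu(X))$ to obtain a $\mu$-measurable set $\tilde{\mathcal K}\subseteq X$ with $\mu(\tilde{\mathcal K})=\tau$ and $J_W(\tau,X,\mu)=\int_{\tilde{\mathcal K}}W\,\mu(\mathrm{d}x)$. Setting $\ell:=\bar W^\star(t,X,\mu)$, for any $0<s<\ell$ definition \eqref{dfSVyrdel} furnishes $\mu(\mathcal{L}^\star(s,W,X))\ge t$. Since $\mu(X\setminus\tilde{\mathcal K})=t/\theta$, a standard inclusion bound gives
\[
\mu\bigl(\tilde{\mathcal K}\cap\mathcal{L}^\star(s,W,X)\bigr)\ge t-\frac{t}{\theta}=\frac{(\theta-1)t}{\theta}.
\]
Restricting the integral to this intersection and using $W\ge s$ there yields
\[
J_W(\tau,X,\mu)\;\ge\;\int_{\tilde{\mathcal K}\cap\mathcal{L}^\star(s,W,X)}W\,\mu(\mathrm{d}x)\;\ge\;s\cdot\frac{(\theta-1)t}{\theta}.
\]
Letting $s\uparrow\ell$ produces \eqref{estJR1}.

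For the upper bound \eqref{estJR2}, I would again apply Theorem \ref{prsolextrprob}, now with $\tau'=\mu(X)-t$, and use the pointwise bound \eqref{propKV}: on the optimizer $\tilde{\mathcal K}'$ we have $W(x)\le W_\star(\tau',X,\mu)$. After this, only the comparison $W_\star(\mu(X)-t,X,\mu)\le \bar W^\star(t,X,\mu)$ remains. Rewriting the left-hand side via $\mu\{W\le s\}=\mu(X)-\mu\{W>s\}$ gives $W_\star(\mu(X)-t,X,\mu)=\sup\{s>0:\mu\{W>s\}>t\}$; any such $s$ satisfies $\mu\{W\ge s\}\ge\mu\{W>s\}>t$, so $s$ lies in the supremum set defining $\bar W^\star(t,X,\mu)$ in \eqref{dfSVyrdel}. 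Integrating the pointwise bound over $\tilde{\mathcal K}'$ and using $\mu(\tilde{\mathcal K}')=\mu(X)-t$ then yields \eqref{estJR2}.

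The main point of care will be the handling of the strict versus non-strict inequalities in \eqref{dfstOm} and \eqref{dfSVyrdel}: in the lower bound one needs $s$ strictly below $\bar W^\star(t,X,\mu)$ and must then pass to the limit, while in the upper bound the comparison between the two rearrangements hinges precisely on turning a strict distributional inequality into a non-strict one. The non-atomicity of $\mu$ is used only through Theorem \ref{prsolextrprob}; once a set $\tilde{\mathcal K}$ with $\mu(\tilde{\mathcal K})$ equal to the prescribed value is in hand, the rest is a direct manipulation of level sets.
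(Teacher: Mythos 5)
Your proposal is correct and follows essentially the same route as the paper: both bounds come from applying Theorem \ref{prsolextrprob}(i) with parameters $\mu(X)-t/\theta$ and $\mu(X)-t$ respectively, the lower bound uses the same inclusion-exclusion estimate $\mu(\tilde{\mathcal K}\cap\mathcal{L}^\star(s))\ge t-t/\theta$ on levels $s$ just below $\bar W^\star(t)$, and the upper bound uses \eqref{propKV} together with the comparison $W_\star(\mu(X)-t)\le\bar W^\star(t)$. The only (minor) divergence is how that last comparison is obtained: the paper deduces it from the optimizer itself, noting $X\setminus\tilde{\mathcal K}\subseteq\mathcal{L}^\star(W_\star(\mu(X)-t))$ with $\mu(X\setminus\tilde{\mathcal K})=t$, whereas you prove it directly from the distribution functions via $\mu\{W>s\}>t\Rightarrow\mu\{W\ge s\}\ge t$; your version is slightly more self-contained since it does not rely on the existence of an optimizer, but the content is the same.
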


	\begin{remark}\label{rembriefnot}
	If $X=\bar\Omega$, where $\Omega$ is an open domain in $\R^d$ and $\mu=\mathrm{mes}_d$, we shall omit $\mu$ in the  notations introduced above, i.e., to write $J_W(t,\bar\Omega)$, $W_\star(t,\bar\Omega)$, $\bar W^\star(t,\bar\Omega)$, $\lambda_\star(s,\,W,\,\bar\Omega)$ and  $\lambda^\star(s,\,W,\,\bar\Omega)$. 
 	In the case where $\Omega=\mathcal{G}_r(\by)$  we shall write $J_W(t,\by,r,\mu)$, $W_\star(t,\by,r,\mu)$, $\bar W^\star(t,\by,r,\mu)$, $\lambda_\star(s,\,W,\,\by,r,\,\mu)$ and $\lambda^\star(s,\,W,\,\by,r,\,\mu)$. 
 	If $\mathcal{G}_r(\by)$ is a ball $B_r(\by)$, we shall use the same notations,F if they could not lead to a confusion.
	If $\mu=\mathrm{mes}_d$, we shall omit $\mu$ in the last notations too.	
\end{remark}

On the base of Theorems \ref{thmeasinstcapcalG} and \ref{prsolextrprob} we obtain the following claim:
	
\begin{theorem}\label{lmmeasinstcapnew}
	Suppose that 
	for some $r_0>0$ and any $r\in(0,r_0)$ the condition
	\begin{equation}\label{cndjVy}
	\lim_{|\by|\rightarrow\infty}J_V(\sigma(r),
	\by,r)=\infty
	\end{equation}
	is satisfied, where
	$\sigma(r)=(1-\tilde\gamma(r))\mathrm{mes}_d(\mathcal{G}_r(0))$ and
	$\tilde\gamma(r)$ satisfies conditions \eqref{cndtildgam}.
	Then the spectrum of the operator $H=-\Delta+V(\e)\cdot$ is
	discrete.
\end{theorem}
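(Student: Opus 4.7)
The plan is to deduce the result directly from Theorem \ref{thmeasinstcapcalG} by rewriting its hypothesis \eqref{cndmeasinstcapcalG} in terms of Problem \ref{extrprobl} and then invoking Theorem \ref{prsolextrprob} to replace the infimum with the explicit quantity $J_V$. First I fix $r \in (0,r_0)$ and observe that the map $F \mapsto E := \mathcal{G}_r(\by) \setminus F$ sets up a correspondence between the family $\mathcal{M}_{\tilde\gamma(r)}(\by,r)$ and the Borel sets $E \subseteq \mathcal{G}_r(\by)$ with $\mathrm{mes}_d(E) \ge \sigma(r)$, since by translation invariance $\mathrm{mes}_d(\mathcal{G}_r(\by)) = \mathrm{mes}_d(\mathcal{G}_r(0))$ and $\sigma(r)=(1-\tilde\gamma(r))\mathrm{mes}_d(\mathcal{G}_r(0))$. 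Hence
\[
\inf_{F\in\mathcal{M}_{\tilde\gamma(r)}(\by,r)}\int_{\mathcal{G}_r(\by)\setminus F}V(\e)\,d\e \;=\; \inf_{E}\int_{E}V(\e)\,d\e,
\]
where the latter infimum ranges over Borel $E\subseteq\mathcal{G}_r(\by)$ with $\mathrm{mes}_d(E)\ge\sigma(r)$.

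Next I identify this quantity with $I_V(\sigma(r),\by,r)$ as in Problem \ref{extrprobl} with $X=\mathcal{G}_r(\by)$, $\mu=\mathrm{mes}_d$ and $W=V$. Here the mildly technical point is that Problem \ref{extrprobl} is formulated over $\mu$-measurable (hence Lebesgue-measurable) sets while $\mathcal{M}_{\tilde\gamma(r)}(\by,r)$ consists of Borel sets; however, since every Lebesgue-measurable set agrees with a Borel set modulo a Lebesgue-null set and the integrand $V$ is non-negative, the infima coincide. The hypotheses of Problem \ref{extrprobl} are met because $\mathcal{G}_r(\by)$ is bounded and $V \in L_{1,\mathrm{loc}}(\R^d)$ (so $V\in L_1(\mathcal{G}_r(\by))$), and because $\sigma(r) \in \bigl(0,\mathrm{mes}_d(\mathcal{G}_r(\by))\bigr)$ thanks to $\tilde\gamma(r)\in(0,1)$ from \eqref{cndtildgam}.

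Since Lebesgue measure is non-atomic, Theorem \ref{prsolextrprob}(ii) now yields
\[
\inf_{F\in\mathcal{M}_{\tilde\gamma(r)}(\by,r)}\int_{\mathcal{G}_r(\by)\setminus F}V(\e)\,d\e \;=\; I_V(\sigma(r),\by,r) \;=\; J_V(\sigma(r),\by,r).
\]
Assumption \eqref{cndjVy} thus translates directly into \eqref{cndmeasinstcapcalG}, and Theorem \ref{thmeasinstcapcalG} furnishes the discreteness of the spectrum of $H$.

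The proof is essentially a sequence of identifications, so there is no real obstacle; the only point requiring care is the bookkeeping of set classes (Borel versus Lebesgue measurable) and the verification of the non-degeneracy condition $\sigma(r)\in(0,\mathrm{mes}_d(\mathcal{G}_r(\by)))$, both of which are routine.
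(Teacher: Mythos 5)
Your proposal is correct and follows essentially the same route as the paper: reduce condition \eqref{cndmeasinstcapcalG} to Problem \ref{extrprobl} via the complementation $F\mapsto\mathcal{G}_r(\by)\setminus F$, apply Theorem \ref{prsolextrprob}(ii) with $\mu=\mathrm{mes}_d$ to identify the infimum with $J_V(\sigma(r),\by,r)$, and invoke Theorem \ref{thmeasinstcapcalG}. The only difference is that the paper contents itself with the one-sided inclusion $\{\mathcal{G}_r(\by)\setminus F : F\in\mathcal{M}_{\tilde\gamma(r)}(\by,r)\}\subset\mathcal{E}(\sigma(r),\mathcal{G}_r(\by),\mathrm{mes}_d)$ and the resulting inequality $\inf_F \ge J_V$, which already suffices; your additional observation that the infima actually coincide (and your remark on the Borel versus Lebesgue-measurable bookkeeping) is correct but not needed for the implication.
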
	

Proposition \ref{lmJR} enables us to reformulate Theorem \ref{lmmeasinstcapnew} in terms of 
the non-increasing rearrangement of the potential $V(\e)$, defined above. 

\begin{theorem}\label{lmmeasinstcap1}
	Suppose that for some $r_0>0$ and any $r\in(0,r_0)$ the
	condition 
	\begin{equation}\label{cndSVy}
	\lim_{|\by|\rightarrow\infty}\bar V^\star(\hat\delta(r),\by,r)=\infty
	\end{equation}
	is satisfied with
	$\hat\delta(r)=\hat\gamma(r)\mathrm{mes}_d(\mathcal{G}_r(0))$ and
	$\hat\gamma(r)$ satisfies conditions \eqref{cndtildgam}.
	Then the spectrum of the operator $H=-\Delta+V(\e)$ is discrete.
\end{theorem}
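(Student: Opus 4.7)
The plan is to deduce Theorem \ref{lmmeasinstcap1} from Theorem \ref{lmmeasinstcapnew} by using the lower bound in Proposition \ref{lmJR} to translate the rearrangement hypothesis \eqref{cndSVy} into the hypothesis \eqref{cndjVy} on $J_V$. First I would fix some $\theta>1$ (say $\theta=2$) and set $\tilde\gamma(r):=\hat\gamma(r)/\theta$. Since $\hat\gamma(r)\in(0,1)$, clearly $\tilde\gamma(r)\in(0,1)$, and
\[
\limsup_{r\downarrow 0} r^{-2(d-2)/d}\tilde\gamma(r)=\tfrac{1}{\theta}\limsup_{r\downarrow 0}r^{-2(d-2)/d}\hat\gamma(r)=\infty,
\]
so $\tilde\gamma(r)$ satisfies conditions \eqref{cndtildgam}.

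Next I would apply Proposition \ref{lmJR} with $X=\mathcal{G}_r(\by)$, $\mu=\mathrm{mes}_d$, $W=V$, and $t=\hat\delta(r)=\hat\gamma(r)\mathrm{mes}_d(\mathcal{G}_r(0))$. Since $\mathcal{G}_r(\by)$ is a translate of $\mathcal{G}_r(0)$, we have $\mathrm{mes}_d(\mathcal{G}_r(\by))=\mathrm{mes}_d(\mathcal{G}_r(0))$, and for $r$ small enough $t<\mathrm{mes}_d(\mathcal{G}_r(\by))$. Estimate \eqref{estJR1} then gives
\[
J_V\!\Bigl((1-\hat\gamma(r)/\theta)\,\mathrm{mes}_d(\mathcal{G}_r(0)),\by,r\Bigr)\ge \tfrac{\theta-1}{\theta}\,\hat\delta(r)\,\bar V^\star(\hat\delta(r),\by,r).
\]
The argument of $J_V$ on the left is precisely $\sigma(r)=(1-\tilde\gamma(r))\mathrm{mes}_d(\mathcal{G}_r(0))$ by our choice of $\tilde\gamma(r)$.

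Now I would pass to the limit $|\by|\to\infty$ with $r\in(0,r_0)$ fixed. The prefactor $\tfrac{\theta-1}{\theta}\hat\delta(r)$ on the right is a strictly positive constant depending only on $r$, while hypothesis \eqref{cndSVy} gives $\bar V^\star(\hat\delta(r),\by,r)\to\infty$. Therefore
\[
\lim_{|\by|\to\infty} J_V(\sigma(r),\by,r)=\infty,
\]
so condition \eqref{cndjVy} of Theorem \ref{lmmeasinstcapnew} holds with the function $\tilde\gamma(r)=\hat\gamma(r)/\theta$, which we already checked satisfies \eqref{cndtildgam}. Invoking Theorem \ref{lmmeasinstcapnew} yields discreteness of the spectrum of $H$.

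This is essentially a parameter-matching argument; I do not anticipate any significant obstacle. The only subtle point is verifying that the rearrangement bound in Proposition \ref{lmJR} is applicable, which requires noting that Lebesgue measure is non-atomic and finite on the bounded Borel set $\mathcal{G}_r(\by)$, so that the hypotheses of Proposition \ref{lmJR} (and of Theorem \ref{prsolextrprob} that it depends on) are met.
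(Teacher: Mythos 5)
Your proposal is correct and follows essentially the same route as the paper's own proof: fix $\theta>1$, set $\tilde\gamma(r)=\hat\gamma(r)/\theta$, check that $\tilde\gamma$ satisfies \eqref{cndtildgam}, apply estimate \eqref{estJR1} of Proposition \ref{lmJR} with $t=\hat\delta(r)$ to obtain $J_V(\sigma(r),\by,r)\ge\frac{\theta-1}{\theta}\hat\delta(r)\bar V^\star(\hat\delta(r),\by,r)$, and then invoke Theorem \ref{lmmeasinstcapnew}. The only superfluous remark in your write-up is the phrase ``for $r$ small enough $t<\mathrm{mes}_d(\mathcal{G}_r(\by))$''; in fact $\hat\gamma(r)\in(0,1)$ for every $r\in(0,r_0)$ by \eqref{cndtildgam}, so $\hat\delta(r)<\mathrm{mes}_d(\mathcal{G}_r(0))$ holds for all such $r$ without any smallness restriction.
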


\begin{remark}\label{remequivcond}
	Notice that condition \eqref{cndSVy} is easier verifiable than condition \eqref{cndjVy}. On the other hand,
	estimates \eqref{estJR1} and \eqref{estJR2} imply that these conditions are equivalent in the following sense: for some function $\tilde\gamma(r)$	satisfying conditions \eqref{cndtildgam} the condition \eqref{cndjVy} is satisfied with $\sigma(r)=(1-\tilde\gamma(r))\mathrm{mes}_d(\mathcal{G}_r(0))$ if and only if
	for some function $\hat\gamma(r)$	satisfying conditions \eqref{cndtildgam} the condition
	\eqref{cndSVy} is satisfied with 	$\hat\delta(r)=\hat\gamma(r)\mathrm{mes}_d(\mathcal{G}_r(0))$
\end{remark}

\begin{remark}\label{remGMD}
	From Theorem 6 of \cite{4} the following criterion of
	discreteness of the spectrum of the operator $H=-\Delta+V(\e)$
	 follows (in our notations): if for some numbers $\delta>0$,
	$c\in(0,1)$ and $r_0>0$ and for any $\by\in\R^d$, $r\in(0, r_0)$
	the condition
	\begin{equation}\label{cndGMD}
	\lambda^\star\Big(\frac{\delta}{\mathrm{mes}_d(Q_r(0))}\int_{Q_r(\by)}V(\e)\,d\e,V,\,r,\,\by\Big)\ge
	c\;\mathrm{mes}_d(B_r(0))
	\end{equation}
	is fulfilled, then the discreteness of the spectrum of the operator $H$ is
	equivalent to the condition:
	$\lim_{|\by|\rightarrow\infty}\int_{Q_r(\by)}V(\e)\,d\e=\infty$
	for some (hence for every) $r>0$. 
	Recall that the cube $Q_r(\by)$ is defined in Remark \ref{remoncubes}. 
	It is easy to see that as a sufficient condition  this
	criterion follows from Theorem \ref{lmmeasinstcap1} with $\mathcal{G}_r(\by)=Q_{r_1}(\by-r\vec a)$ 
	($r_1=2r/\sqrt{d}$, $\vec a=(d^{-1/2}, d^{-1/2},\dots,d^{-1/2})$), if one takes
	$\tilde\gamma(r)\equiv c$.  
	\end{remark}
 The concept of base polyhedron for harmonic capacity enables us to obtain some conditions 
 of discreteness of the spectrum for  Schr\"odinger operator, covering potentials which do not satisfy conditions of Theorem \ref{lmmeasinstcap1}. 
For $\mu\in
M^+(\by,r)$ denote by
$\mathcal{M}_\gamma^\mu(\by,r)\;(\gamma\in(0,1))$ the collection
of all Borel sets $F\subseteq\bar {\mathcal G}_r(\by)$ satisfying the
condition
\begin{equation}\label{defMmu}
\mu(F)\le\gamma\,\mu(\mathcal{G}_r(\by)).
\end{equation}
The following claim is valid:
\begin{theorem}\label{thusecore1}
	Suppose that for some $r_0>0$ and any $r\in(0,r_0)$
	\begin{eqnarray}\label{cndusecore4}
	\lim_{|\by|\rightarrow\infty}\;\sup_{\mu\,\in\,
		\mathrm{BP}(\by,r)}\;\inf_{F\in\mathcal{M}^\mu_{\gamma(r)}(\by,r)}\int_{\mathcal{G}r(\by)\setminus
		F}V(\e)\,\mathrm{d}\e=\infty,
	\end{eqnarray}
	where $\gamma(r)$ satisfies the condition \eqref{cndgammar}.
Then the spectrum of the operator $H=-\Delta+V(\e)$ is discrete.
\end{theorem}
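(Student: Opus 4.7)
The plan is to deduce the Mazya--Shubin hypothesis \eqref{cndmolch1} from \eqref{cndusecore4} and then invoke Theorem \ref{thMazSh}. The entire argument rests on the defining inequality $\mu(A)\le \mathrm{cap}(A)$ built into the base polyhedron \eqref{defcore}; no new analytic input is needed beyond Theorem \ref{thMazSh} itself.

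First, I would fix $r\in(0,r_0)$ and $\by\in\R^d$ and make the following elementary observation. If $F\in\mathcal{N}_{\gamma(r)}(\by,r)$, so that $F$ is compact with $\mathrm{cap}(F)\le\gamma(r)\,\mathrm{cap}(\bar{\mathcal G}_r(\by))$, and if $\mu\in\mathrm{BP}(\by,r)$, then by \eqref{defcore}
\[
\mu(F)\le \mathrm{cap}(F)\le \gamma(r)\,\mathrm{cap}(\bar{\mathcal G}_r(\by))=\gamma(r)\,\mu(\bar{\mathcal G}_r(\by)).
\]
Identifying $\mu(\bar{\mathcal G}_r(\by))$ with $\mu(\mathcal G_r(\by))$ (a harmless boundary identification, since the Lipschitz boundary of $\mathcal G_r(\by)$ admits an arbitrarily small enlargement of negligible capacity, so it may be absorbed into the domain without affecting either bound), this places $F$ in $\mathcal{M}^\mu_{\gamma(r)}(\by,r)$. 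Hence
\[
\mathcal{N}_{\gamma(r)}(\by,r)\subseteq \mathcal{M}^\mu_{\gamma(r)}(\by,r)\qquad\text{for every }\mu\in\mathrm{BP}(\by,r).
\]

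Second, this inclusion of admissible sets reverses the direction of the infima: for every fixed $\mu\in\mathrm{BP}(\by,r)$,
\[
\inf_{F'\in\mathcal{M}^\mu_{\gamma(r)}(\by,r)}\int_{\mathcal{G}_r(\by)\setminus F'}V(\e)\,\mathrm{d}\e\;\le\;\inf_{F\in\mathcal{N}_{\gamma(r)}(\by,r)}\int_{\mathcal{G}_r(\by)\setminus F}V(\e)\,\mathrm{d}\e.
\]
Taking the supremum over $\mu\in\mathrm{BP}(\by,r)$, which is nonempty by Proposition \ref{prcore}, preserves the inequality, so the left-hand side of \eqref{cndusecore4} is a lower bound for the Mazya--Shubin cost in \eqref{cndmolch1}. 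Hypothesis \eqref{cndusecore4} then forces the Mazya--Shubin functional to diverge to $+\infty$ as $|\by|\to\infty$ for every $r\in(0,r_0)$, and since $\gamma(r)$ already satisfies \eqref{cndgammar}, Theorem \ref{thMazSh} yields discreteness of the spectrum of $H$.

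The mathematical content is essentially a one-line consequence of the definition of $\mathrm{BP}(\by,r)$ combined with standard manipulation of $\sup\inf$. The only point requiring any care, and what I would regard as the main (mild) obstacle, is the open-versus-closed boundary identification for $\mu$ when passing between $\mathcal G_r(\by)$ and $\bar{\mathcal G}_r(\by)$ in the normalization of \eqref{defMmu}; this is handled by a routine approximation argument using that the Lipschitz boundary can be thickened with arbitrarily small contribution to both $\mu$ and the capacity.
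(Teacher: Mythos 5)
Your proof is correct and follows essentially the same route as the paper: you derive $\mathcal{N}_{\gamma(r)}(\by,r)\subseteq\mathcal{M}^\mu_{\gamma(r)}(\by,r)$ directly from the defining inequality of $\mathrm{BP}(\by,r)$, reverse the infima, take a supremum over $\mu$, and invoke Theorem \ref{thMazSh}. The remark about identifying $\mu(\mathcal{G}_r(\by))$ with $\mu(\bar{\mathcal G}_r(\by))$ is a reasonable clarification of a point the paper leaves implicit, but it does not change the argument.
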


Since each measure $\mu\in\mathrm{BP}_{eq}(\by,r)$ is equivalent to the Lebesgue measure
$\mathrm{mes}_d$, the Lebesgue completion of $\Sigma_B(\mathcal{G}r(\by))$ by $\mu$ coincides 
with $\Sigma_L(\mathcal{G}r(\by))$. Hence we can consider the complete measure space 
$\big(\mathcal{G}r(\by),\,\Sigma_L(\mathcal{G}r(\by)),\,\mu\big)$. 
For $\mu\in\mathrm{BP}_{eq}(\by,r)$
denote by $\alpha_\mu(\e)$ density of the measure
$\mathrm{mes}_d$ with respect to $\mu$, i.e.,
\begin{equation}\label{dfalphamu}
\alpha_\mu:=\frac{\mathrm{d}\,\mathrm{mes}_d}{\mathrm{d}\,\mu}.
\end{equation}

The following claim is based on Theorems \ref{thusecore1},
\ref{prsolextrprob} and Proposition \ref{lmJR}:
\begin{theorem}\label{thusecore2}
	Suppose that the condition 
	\begin{equation}\label{cndZmu1}
	\lim_{|\by|\rightarrow\infty}\;\sup_{\mu\in\mathrm{BP}_{eq}(\by,\,r)}\bar Z_{\mu}^\star(\psi_\mu(r),\,
	\by,\,r,\,\mu)=\infty
	\end{equation}
	is satisfied for some $r_0>0$ and any $r\in(0,r_0)$, where
	\begin{equation}\label{dfZmu}
	Z_{\mu}(\e)=\alpha_\mu(\e)\, V(\e),
	\end{equation}
	 $\psi_\mu(r)=\gamma(r)\mu(\mathcal{G}_r(0))$ and $\gamma(r)$ satisfies
	conditions \eqref{cndgammar}. Then the spectrum of the operator
	$H=-\Delta+V(\e)$ is discrete.
\end{theorem}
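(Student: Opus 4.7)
The strategy is to reduce hypothesis \eqref{cndZmu1} to hypothesis \eqref{cndusecore4} of Theorem~\ref{thusecore1} and then invoke that theorem. The key ingredients are a change of measure $\mathrm{d}\,\mathrm{mes}_d=\alpha_\mu\,\mathrm{d}\mu$, the representation of the inner infimum via Theorem~\ref{prsolextrprob}, and the rearrangement lower bound \eqref{estJR1} of Proposition~\ref{lmJR}.

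First I fix $r\in(0,r_0)$ and restrict the outer supremum in \eqref{cndusecore4} from $\mathrm{BP}(\by,r)$ to the smaller set $\mathrm{BP}_{eq}(\by,r)$; this only decreases the quantity, so it suffices to prove the reduced statement. For $\mu\in\mathrm{BP}_{eq}(\by,r)$, since $\mathrm{d}\,\mathrm{mes}_d=\alpha_\mu\,\mathrm{d}\mu$ and $Z_\mu=\alpha_\mu V$, the substitution $E=\mathcal{G}_r(\by)\setminus F$ rewrites the inner infimum in \eqref{cndusecore4} as
\begin{equation*}
\inf\Big\{\int_E Z_\mu(\e)\,\mu(\mathrm{d}\e):\;E\subseteq\bar{\mathcal{G}}_r(\by),\;\mu(E)\ge(1-\gamma'(r))C_r\Big\}=I_{Z_\mu}\big((1-\gamma'(r))C_r,\,\bar{\mathcal{G}}_r(\by),\,\mu\big),
\end{equation*}
where $C_r:=\mu(\mathcal{G}_r(\by))=\mathrm{cap}(\bar{\mathcal{G}}_r(0))$ is independent of $\by$ and $\mu$ (by translation invariance of capacity and the defining equality in \eqref{defcore}), and $\gamma'(r)$ is a threshold still free to be chosen subject to \eqref{cndgammar}. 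Because $\mu$ is equivalent to Lebesgue measure it is non-atomic, so Theorem~\ref{prsolextrprob} gives $I_{Z_\mu}=J_{Z_\mu}$; note also that $Z_\mu\in L_1(\bar{\mathcal{G}}_r(\by),\mu)$ since $\int Z_\mu\,\mathrm{d}\mu=\int_{\mathcal{G}_r(\by)}V\,\mathrm{d}\e<\infty$ by local integrability of $V$.

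Next, to make the rearrangement $\bar Z_\mu^\star$ appear evaluated precisely at $\psi_\mu(r)=\gamma(r)C_r$, I apply \eqref{estJR1} with $\theta=2$ and $t=\gamma(r)C_r\in(0,C_r)$. This forces $\mu(X)-t/\theta=(1-\gamma(r)/2)C_r$, so I set $\gamma'(r):=\gamma(r)/2$, which still satisfies \eqref{cndgammar}. The estimate then reads
\begin{equation*}
J_{Z_\mu}\big((1-\gamma'(r))C_r,\,\bar{\mathcal{G}}_r(\by),\,\mu\big)\ge\tfrac{\gamma(r)C_r}{2}\,\bar Z_\mu^\star\big(\psi_\mu(r),\,\by,\,r,\,\mu\big).
\end{equation*}
Taking the supremum over $\mu\in\mathrm{BP}_{eq}(\by,r)\subseteq\mathrm{BP}(\by,r)$ and letting $|\by|\to\infty$, the prefactor $\gamma(r)C_r/2$ is positive and independent of $\by$, so \eqref{cndZmu1} drives the left-hand side to $\infty$. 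Hence \eqref{cndusecore4} of Theorem~\ref{thusecore1} holds with $\gamma'$ in place of $\gamma$, and Theorem~\ref{thusecore1} yields discreteness of the spectrum of $H$.

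The only delicate point is the bookkeeping of the three distinct thresholds involved: the level $\gamma'(r)C_r$ in the constraint of Theorem~\ref{thusecore1}, the shift $t/\theta$ inside Proposition~\ref{lmJR}, and the prescribed argument $\psi_\mu(r)=\gamma(r)C_r$ of the rearrangement in the hypothesis. The choice $(\theta,\gamma')=(2,\gamma/2)$ aligns them cleanly; any $\theta>1$ with $\gamma':=\gamma/\theta$ would do equally well, producing a different multiplicative constant $(\theta-1)\gamma(r)C_r/\theta$ that still vanishes only with $\gamma(r)$ and is harmless in the limit.
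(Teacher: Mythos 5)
Your proof is correct and follows essentially the same route as the paper's: both restrict the supremum to $\mathrm{BP}_{eq}(\by,r)$, recast the inner infimum as $I_{Z_\mu}$ by changing to the measure $\mu$, apply Theorem~\ref{prsolextrprob} (using that $\mu$ is finite, non-atomic, and $Z_\mu\in L_1$) together with estimate \eqref{estJR1}, and then invoke Theorem~\ref{thusecore1} with the rescaled threshold $\gamma/\theta$. The only difference is cosmetic: you fix $\theta=2$, while the paper keeps $\theta>1$ arbitrary, which as you note affects only the multiplicative constant.
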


The following consequence of the previous theorem we obtain replacing the set 
$\mathrm{BP}_{eq}$ by its part $\mathrm{M}_f(\by,\,r)$ (see Definition \ref{dfMacry} and Proposition \ref{lmdescribedens}):
\begin{corollary}\label{thusecoredstort}
	Suppose that 
	 the condition 
	\begin{equation}\label{cndZmu3}
	\lim_{|\by|\rightarrow\infty}\;\sup_{\mu\in\mathrm{M}_f(\by,\,r)}\bar Z_{\mu}^\star(\psi_\mu(r),
	B_r(\by),\,\mu)=\infty
	\end{equation}
	is satisfied for some $r_0>0$ and any $r\in(0,r_0)$, where $Z_{\mu}$ is defined by \eqref{dfZmu},
 $\psi_\mu(\by,r)=\gamma(r)\mu(B_r(\by))$ and $\gamma(r)$ satisfies
	conditions \eqref{cndgammar}. Then the spectrum of the operator
	$H=-\Delta+V(\e)$ is discrete.
\end{corollary}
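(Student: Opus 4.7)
The plan is to deduce this corollary directly from Theorem \ref{thusecore2} by choosing $\mathcal{G}$ to be the unit ball and comparing suprema over nested classes of measures; there is no independent analytic content.

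First I would verify that the unit ball $\mathcal{G}=B_1(0)$ satisfies conditions (a) and (b) imposed in the Introduction: it is bounded, star-shaped with respect to every one of its interior points (in particular with respect to every point of $B_\rho(0)$ for any $\rho\in(0,1)$), and its diameter equals $2$. With this choice, the rescaled-translated domain \eqref{dfcalGry} becomes $\mathcal{G}_r(\by)=B_r(\by)$, so the framework of Theorem \ref{thusecore2} specializes to balls and its hypothesis \eqref{cndZmu1} turns into a statement about $\mathrm{BP}_{eq}(\bar B_r(\by))$.

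Next I would invoke Proposition \ref{lmdescribedens}, which asserts the inclusion $\mathrm{M}_f(\by,r)\subseteq \mathrm{BP}_{eq}(\bar B_r(\by))$. Monotonicity of the supremum over a larger family then gives
\begin{equation*}
\sup_{\mu\in \mathrm{M}_f(\by,r)}\bar Z_\mu^\star\bigl(\psi_\mu(\by,r),B_r(\by),\mu\bigr)\;\le\;\sup_{\mu\in \mathrm{BP}_{eq}(\bar B_r(\by))}\bar Z_\mu^\star\bigl(\psi_\mu(\by,r),B_r(\by),\mu\bigr),
\end{equation*}
so once I check that the threshold $\psi_\mu$ used in the corollary agrees with the one used in Theorem \ref{thusecore2}, condition \eqref{cndZmu3} will force condition \eqref{cndZmu1}, and the conclusion will follow.

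The one point requiring a line of care is precisely this identification of $\psi_\mu$. In the corollary it is $\psi_\mu(\by,r)=\gamma(r)\mu(B_r(\by))$, while in Theorem \ref{thusecore2} it is $\psi_\mu(r)=\gamma(r)\mu(\mathcal{G}_r(0))=\gamma(r)\,\mathrm{cap}(\bar B_r(0))$. However, for any $\mu\in \mathrm{BP}(\bar B_r(\by))$ the defining equality in \eqref{defcore} gives $\mu(B_r(\by))=\mathrm{cap}(\bar B_r(\by))$, and translation invariance of harmonic capacity gives $\mathrm{cap}(\bar B_r(\by))=\mathrm{cap}(\bar B_r(0))$. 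Hence the two thresholds coincide throughout $\mathrm{BP}_{eq}$, and an application of Theorem \ref{thusecore2} completes the argument. The proof encounters no real obstacle: all substantive content is absorbed into Proposition \ref{lmdescribedens} (embedding $\mathrm{M}_f$ into the base polyhedron) and Theorem \ref{thusecore2}, both of which are treated as black boxes.
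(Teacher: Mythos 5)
Your proof is correct and takes essentially the same route as the paper: both arguments reduce the corollary to Theorem \ref{thusecore2} via the inclusion $\mathrm{M}_f(\by,r)\subseteq\mathrm{BP}_{eq}(\by,r)$ from Proposition \ref{lmdescribedens} and the monotonicity of the supremum. Your added remarks — that one specializes $\mathcal{G}$ to the unit ball, and that the normalizing constant $\mu(B_r(\by))=\mathrm{cap}(\bar B_r(0))$ is the same for every $\mu$ in the base polyhedron by the defining property \eqref{defcore} and translation invariance of capacity — are correct and merely make explicit what the paper leaves implicit.
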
 

On the base of 
Corollary \ref{thusecoredstort} we shall formulate a easier verifiable sufficient condition for discreteness of the spectrum of $H$ 
in terms of rearrangements of the potential $V(\e)$  on cubes of $m$-adic partitions of unit cubes with respect to Lebesgue measure. Recall that we have defined the cube $Q_r(\by)$ in Remark \ref{remoncubes}.
Consider a covering of the space $\R^d$ by the cubes $Q_1(\vec l)\;(\vec l\in\Z^d)$ and for any $\vec l\in\Z^d$ consider a sequence $\{D_j(\vec l)\}_{j=1}^\infty$ of 
subsets of $Q_1(\vec l)$. Furthermore, for some integers $n>0$ and $m>1$ consider the $m$-adic partition of each cube $Q_1(\vec l)$: $\{Q(\vec\xi,n)\}_{\vec\xi\in\Xi_n(\vec l)}$,
where $Q(\vec\xi,n)=Q_{m^{-n}}(\vec\xi)$ and $\Xi_n(\vec l)=\{\vec\xi\in m^{-n}\cdot\Z^d\,:\,Q(\vec\xi,n)\subset  Q_1(\vec l)\}$. Denote
\begin{equation}\label{dfXijvecl}
\Xi_n(\vec l,\,j)=\{\vec\xi\in m^{-n}\cdot\Z^d\,:\,Q(\vec\xi,n)\subseteq  D_j(\vec l)\}.
\end{equation} 
\begin{remark}\label{rembriefnot1}
For brevity we shall write in the next theorem and in its proof  $\bar Z^\star_{\mu}(u,\,\by\,,r)$, $\bar V^\star(u,\,\by,\,r)$ and $\bar V^\star(u,\,\vec\xi,\,n)$ instead of
\begin{equation*}
\bar Z^\star_{\mu}\big(u\cdot\mu(B_r(\by)),\,B_r(\by),\,\mu\big),\quad 
\bar V^\star\big(u\cdot
\mathrm{mes}_d(Q_{r}(\by)),\,Q_{r}(\by)\big) 
\end{equation*}
and $\bar V^\star\big(u\cdot\mathrm{mes}_d(Q(\vec\xi,n)),\,Q(\vec\xi,n)\big)$.	
\end{remark}

The promised theorem is following:

\begin{theorem}\label{thlogmdenspolyhed}
	Suppose that, $\theta\in(0,1)$ and for each $\vec l\in\Z^d$ the sequence $\{D_j(\vec l)\}_{j=1}^\infty$ forms a $(\log_m,\,\theta)$- dense system in $Q_1(\vec l)$. Furthermore,
	suppose that 
	\begin{equation}\label{Xinonempty}
	\forall\;\vec l\in\Z^d,\;n\in\N,\;j\in\{1,2.\dots,n\}:\quad \Xi_{n}(\vec l,j)\neq\emptyset. 
	\end{equation}
	Let $\gamma(r)$ be a nondecreasing monotone function 
	satisfying condition \eqref{cndgammar}. If for 
	any natural $n$ the condition 
	\begin{equation}\label{cndlogdenspolyhed}
	\lim_{|\vec l|\rightarrow\infty}\;\min_{\vec\xi\in\bigcup_{j=1}^n\Xi_{n}(\vec l,\,j)}\bar V^\star\big(\gamma(m^{-n}),\,\vec\xi,\,n\big)=\infty
	\end{equation}
	is satisfied, then the spectrum of the operator $H=-\Delta+V(\e)\cdot$ is discrete.
\end{theorem}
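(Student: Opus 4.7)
The plan is to verify condition \eqref{cndZmu3} of Corollary \ref{thusecoredstort} with $\gamma$ replaced by an auxiliary function $\gamma'(r)$ that still satisfies \eqref{cndgammar}. For $r>0$ small and $|\by|$ large, pick $\vec l\in\Z^d$ nearest to $\by$ (so $|\vec l|\to\infty$ with $|\by|$); for $r<\sqrt d/2$ the inscribed cube $Q_{r/\sqrt d}(\by)$ lies in $B_r(\by)\cap Q_1(\vec l)$. Apply the $(\log_m,\theta)$-density of $\{D_j(\vec l)\}$ in $Q_1(\vec l)$ to the cube $Q_{r/\sqrt d}(\by)$: this yields some $j\le[\log_m(\sqrt d/(\theta r))]$, a regular parallelepiped $\Pi\subseteq D_j(\vec l)$, and a cube $Q_{\theta r/\sqrt d}(\bs)\subseteq \Pi\cap Q_{r/\sqrt d}(\by)$. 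Set $n:=\lceil\log_m(2\sqrt d/(\theta r))\rceil$; then $m^{-n}\in[\theta r/(2\sqrt d\,m),\theta r/(2\sqrt d)]$, $j\le n$, and some $m$-adic cube $Q(\vec\xi,n)$ fits inside $Q_{\theta r/\sqrt d}(\bs)\subseteq D_j(\vec l)\cap B_r(\by)$, giving $\vec\xi\in \Xi_n(\vec l,j)\subseteq\bigcup_{k=1}^n\Xi_n(\vec l,k)$. By \eqref{cndlogdenspolyhed} there is $\Lambda_{\vec l}\to\infty$ (as $|\vec l|\to\infty$) so that $E:=\{x\in Q(\vec\xi,n):V(x)\ge\Lambda_{\vec l}\}$ has $\mathrm{mes}_d(E)\ge\gamma(m^{-n})\mathrm{mes}_d(Q(\vec\xi,n))$; the comparability of $m^{-n}$ with $r$ then gives $\eta:=m_{d,r}(E)\ge C_2\gamma(m^{-n})$ with $C_2=C_2(d,\theta,m)>0$.

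Build $\mu\in\mathrm{M}_f(\by,r)$ as follows. By the isomorphism theorem for nonatomic probability spaces, choose $s\in\mathcal{S}(\bar B_r(\by),m_{d,r})$ restricting to a measure-preserving bijection $E\to[1-\eta,1]$, and take $\mu$ with density $\mathrm{cap}(\bar B_r(0))\,f'(s)$ with respect to $m_{d,r}$, where $f(t)=t^{(d-2)/d}$; then $\mu\in\mathrm{M}_f(\by,r)$ by Definition \ref{dfMacry}. On $E$ one has $s\in[1-\eta,1]$, hence $f'(s)\le\tfrac{d-2}{d}(1-\eta)^{-2/d}$; this together with $\mathrm{cap}(\bar B_r(0))\sim r^{d-2}$ and $\mathrm{mes}_d(B_r(\by))\sim r^d$ yields $\alpha_\mu\ge C_1 r^2(1-\eta)^{2/d}$, and so $Z_\mu\ge C_1 r^2(1-\eta)^{2/d}\Lambda_{\vec l}$ on $E$. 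The change-of-variables formula for $s$ and the concavity of $u\mapsto u^{(d-2)/d}$ give
\[
\mu(E)=\mathrm{cap}(\bar B_r(0))\big(1-(1-\eta)^{(d-2)/d}\big)\ge\tfrac{d-2}{d}\eta\,\mathrm{cap}(\bar B_r(0))\ge C_3\gamma(m^{-n})\mu(B_r(\by)).
\]

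Now define $\gamma'(r):=C_3\gamma(m^{-n(r)})$. The estimates above show $\bar Z_\mu^\star\big(\gamma'(r)\mu(B_r(\by)),\,B_r(\by),\,\mu\big)\ge C_1 r^2(1-\eta)^{2/d}\Lambda_{\vec l}\to\infty$ as $|\by|\to\infty$, verifying \eqref{cndZmu3} for $\gamma'$. Using the monotonicity of $\gamma$ and the lower bound $m^{-n(r)}\ge\theta r/(2\sqrt d\,m)$, one obtains $\gamma'(r)\ge C_3\gamma(\theta r/(2\sqrt d\,m))$, whence
\[
\limsup_{r\downarrow 0}r^{-2}\gamma'(r)\ge C_3\big(\theta/(2\sqrt d\,m)\big)^2\limsup_{u\downarrow 0}u^{-2}\gamma(u)=\infty
\]
by \eqref{cndgammar} for $\gamma$; thus $\gamma'$ satisfies \eqref{cndgammar} and Corollary \ref{thusecoredstort} applies, proving discreteness of the spectrum. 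The main technical obstacle is precisely this scale-bridging: the hypothesis operates on the discrete scales $m^{-n}$, while Corollary \ref{thusecoredstort} requires continuous $r$, and the $(\log_m,\theta)$-density property together with monotonicity of $\gamma$ is exactly what makes the two match up.
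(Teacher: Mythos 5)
Your proof is correct, but it follows a genuinely different route from the paper's. The paper fixes, for each ball $B_r(\by)$, a single \emph{geometric} measure $\mu_s\in\mathrm{M}_f(\by,r)$ whose density is $f'\circ s_{r,\by}$ with $s_{r,\by}$ determined solely by the first-coordinate distribution function (formula \eqref{dfsrbynew}); it then invokes Lemma \ref{lmineqforrear} (showing $\bar Z^\star_{\mu_s}(\kappa t,\by,r)\ge\delta\,\bar W^\star(t,\tilde\by,\tilde r)$ on an inscribed sub-cube via the set $\Pi_\delta$ where $(f')^{-1}\ge\delta$) and Lemma \ref{prestbarVstarbelow} (bridging from the inscribed cube to the $m$-adic cubes inside the $(\log_m,\theta)$-dense system via Lemmas \ref{lmsumrear1}--\ref{lminclrear2}). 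You instead build the measure $\mu\in\mathrm{M}_f(\by,r)$ \emph{adaptively}, choosing $s\in\mathcal{S}(\bar B_r(\by),m_{d,r})$ so that it sends the ``good'' set $E=\{V\ge\Lambda_{\vec l}\}\cap Q(\vec\xi,n)$ onto the top interval $[1-\eta,1]$, where $f'$ attains its minimum $\tfrac{d-2}{d}$; this makes $\alpha_\mu$ maximal exactly where $V$ is large, directly forcing $Z_\mu$ to be large on $E$. Your computation of $\mu(E)=\mathrm{cap}(\bar B_r(0))\bigl(1-(1-\eta)^{(d-2)/d}\bigr)\ge\tfrac{d-2}{d}\eta\,\mu(\bar B_r(\by))$ via the change-of-variables for the measure-preserving map is exactly what replaces the paper's chain of rearrangement lemmas. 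The trade-off: your route is shorter and exhibits the role of the base polyhedron $\mathrm{M}_f$ more transparently (you actually exploit the freedom of choosing $\mu$, rather than just observing the image of one fixed $\mu_s$), but it relies on the isomorphism theorem for non-atomic standard probability spaces to produce the required $s$, whereas the paper's $s_{r,\by}$ is explicit. Both approaches handle the scale-bridging from continuous $r$ to the discrete $m^{-n}$ by monotonicity of $\gamma$ and the comparability $m^{-n}\asymp r$ coming from $(\log_m,\theta)$-density.

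One small expository gap in your write-up: from \eqref{cndlogdenspolyhed} you define $E$ with threshold equal to $\Lambda_{\vec l}$, but since $\bar V^\star$ is a supremum, the set $\{V\ge\bar V^\star(\cdot)\}$ need not attain the stated measure; one should take the threshold to be $\Lambda_{\vec l}-\epsilon$ (or $\Lambda_{\vec l}/2$) to be safe, as the paper does in the analogous step in Lemma \ref{lmineqforrear}. This is easily repaired and does not affect the conclusion. Also, your observation that the nearest lattice point $\vec l$ guarantees $Q_{r/\sqrt d}(\by)\subset Q_1(\vec l)$ for $r<\sqrt d/2$ (because the unit cubes $Q_1(\vec l)$ have side $2$ and overlap) is a genuine simplification of the paper's ``$2^d$ adjacent cubes'' argument.
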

\begin{remark}\label{remonexampl}
	 We shall construct a family of potentials $V(\e)$ (see Section \ref{sec:examples}, Examples \ref{ex4}, \ref{excorpolyhed}) such that   conditions 
	 of Theorem \ref{thlogmdenspolyhed} are satisfied for them (hence the spectrum of $H$ is discrete), but them do not satisfy condition 
	 \eqref{cndGMD} of \cite{4}, and this family contains potentials which do not satisfy condition \eqref{cndSVy} of Theorem \ref{lmmeasinstcap1}.
\end{remark}

\section{Proof of main results} \label{sec:proofmainres}
\setcounter{equation}{0}

Recall that we use brief notations indicated in Remark \ref{rembriefnot}.

\subsection{Proof of Theorem \ref{thmeasinstcapcalG}}
\begin{proof}
 	Consider the following quantities connected 
	with the domain $\mathcal{G}$ having the form \eqref{formofcalG}:
	\begin{equation}\label{dfbarrrm}
	\bar r:=\max_{\omega\in S^{d-1}}r(\omega),\quad r_m:=\min_{\omega\in S^{d-1}}r(\omega),
	\end{equation}
	\begin{equation}\label{dfconstG}
	G:=\Big(\frac{\bar r}{r_m}\Big)^d.
	\end{equation}
	we see from \eqref{formofcalG}, \eqref{dfcalGry}, \eqref{dfbarrrm}
	and \eqref{dfconstG} that $\mathcal{G}_r(\by)\subseteq B_{\bar r\cdot r}(\by)$ 
	and 
	\begin{equation*}
	\mathrm{mes}_d(B_{\bar r\cdot r}(\by))\le G\cdot\mathrm{mes}_d(\mathcal{G}_r(\by)).
	\end{equation*}
	Let us define
	\begin{equation}\label{dfgamcalG}
	\gamma(r)=\big(\tilde\gamma(r)/G\big)^{(d-2)/d}.
	\end{equation}
	In view of \eqref{cndtildgam}. the function $\gamma(r)$ satisfies condition \eqref{cndgammar}.
	Suppose that $F\in\mathcal{N}_{\gamma(r)}(\by,r)$, where the collection $\mathcal{N}_{\gamma}(\by,r)$
	is defined by \eqref{defNcap}.  In view of
	the isocapacity inequality \eqref{isocapineq}
 	and the fact that it comes as
	identity for $F=\bar B_{\bar r\cdot r}(\by)$, we have, taking into account
	\eqref{dfgamcalG}:
	\begin{eqnarray*}\
	&&\frac{\mathrm{mes}_d(F)}{\mathrm{mes}_d
		(\mathcal{G}_r(\by))}\le G\frac{\mathrm{mes}_d(F)}{\mathrm{mes}_d
		(B_{\bar r\cdot r}(\by))}\le G\Big(\frac{\mathrm{cap}(F)}{\mathrm{cap}(\bar
		B_{\bar r\cdot r}(\by))}\Big)^{d/(d-2)}\le\nonumber\\
	&&  G\Big(\frac{\mathrm{cap}(F)}{\mathrm{cap}(\bar
		{\mathcal G}_r(\by))}\Big)^{d/(d-2)}\le G(\gamma(r))^{d/(d-2)}=\tilde\gamma(r),
	\end{eqnarray*}
	i.e., $F\in\mathcal{M}_{\tilde\gamma(r), \mathcal{G}}(\by,r)$. Thus,
	$\mathcal{N}_{\gamma(r)}(\by,r)\subseteq\mathcal{M}_{\tilde\gamma(r)}(\by,r)$.
	Hence
	\begin{equation*}
	\inf_{F\in\mathcal{N}_{\gamma(r)}(\by,r)}\int_{B_r(\by)\setminus
		F}V(\e)\, \mathrm{d}\e\ge
	\inf_{F\in\mathcal{M}_{\tilde\gamma(r)}(\by,r)}\int_{B_r(\by)\setminus
		F}V(\e)\, \mathrm{d}\e.
	\end{equation*}
	Therefore in view of condition \eqref{cndmeasinstcapcalG}, condition \eqref{cndmolch1} is satisfied. 
	Hence by Theorem \ref{thMazSh} the spectrum of the operator $H$ is discrete.
	
	Theorem \ref{thmeasinstcapcalG} is proven.
\end{proof}

\subsection{Proof of Theorem \ref{prsolextrprob}}
\begin{proof}
	In addition to the notations \eqref{dfIVOm}-\eqref{dfKVsminOm}  let us denote
	\begin{equation*}
	\mathcal{K}_W\big(t,\,X,\,\mu\big)={\mathcal L} _\star(s,X,\,\mu)\vert_{s=W_\star(t,\,X,\,\mu)},
	\end{equation*}
	\begin{equation*}
	\kappa_W(t,\,X,\,\mu)=\mu\big(\mathcal{K}_W(t,\,X,\,\mu)\big).
	\end{equation*}
	For the brevity we shall omit $W$, $X$ and $\mu$ in the brackets and in the subscripts of all the notations mentioned above.
	
	(i) It is easy to show that the function
	$\lambda_\star(s)$ is non-decreasing, right
	continuous, for any $s>0$ $\mathcal{L}_\star(s)\setminus\mathcal{L}_\star(s^-)=\{x\in X\,:\,W(x)=s\}$
	and $\mu\big(\mathcal{L}_\star(s)\setminus\mathcal{L}_\star(s^-)\big)=\lambda_\star(s)-\lambda_\star(s^-)$.
	Furthermore, in view of definition \eqref{dfstOm}, 
	\begin{equation*}
	\kappa^-(t)=\lim_{\,s\,\uparrow\,W_\star(t)}\lambda_\star(s)\le t\le\lim_{\,s\,\downarrow\,W_\star(t)}\lambda_\star(s)\le \kappa(t).
	\end{equation*}
By Proposition \ref{lmpartofset}, there exists a
	$\mu$-measurable set $D$ contained in ${\mathcal K}(t)\setminus{\mathcal K}^-(t)$  such that 
	$\mu(D)=t-\kappa^-(t)$, hence \eqref{estmesKV} is valid with
	$\tilde{\mathcal K}={\mathcal K}^-(t)\cup D$.
	Furthermore, it is clear that $W(x)<W_\star(t)$ for $x\in{\mathcal K}^-(t)$, $W(x)>W_\star(t)$ for $x\in X\setminus{\mathcal K}(t)$ and 
	$W(x)=W_\star(t)$ for $x\in{\mathcal K}(t)\setminus{\mathcal K}^-(t)$. These circumstances and \eqref{dfJVOm} imply that the representation \eqref{reprJV} is valid and the set $\tilde{\mathcal K}$ has the properties  \eqref{propKV}.
	
	(ii) In view of \eqref{reprJV}, \eqref{estmesKV} and definition \eqref{dfIVOm} 
	the inequality $It)\le
	J(t)$ is valid. Let us prove the inverse
	inequality. Using again definition \eqref{dfIVOm}, let us take an
	arbitrary $\epsilon>0$ and a set  $E\in\mathcal{E}(t,X,\,\mu)$, i.e., 
	\begin{equation}\label{estmesG}
\mu(E)\ge t,
\end{equation}	
	such that
	\begin{equation}\label{estintKV}
	\int_E W(x)\, \mu(\mathrm{d}x)\le I(t)+\epsilon.
	\end{equation}
We have:
	\begin{equation}\label{reprintbyG}
	\int_E W(x)\, \mu(\mathrm{d}x)=\int_{E\cap\tilde{\mathcal
			K}}W(x)\,
	\mu(\mathrm{d}x)+\int_{E\setminus\tilde{\mathcal
			K}}W(x)\, \mu(\mathrm{d}x).
	\end{equation}
	In view of \eqref{propKV}-b,
	\begin{equation}\label{estintbyGminusKV}
	\int_{E\setminus\tilde{\mathcal K}}W(x)\,
	\mu(\mathrm{d}x)\ge
	W_\star(t)\mu\big(E\setminus\tilde{\mathcal
		K}\big).
	\end{equation}
	On the other hand, in view of \eqref{estmesKV} and
	\eqref{estmesG},
	\begin{eqnarray*}
		&&\mu(E)=\mu\big(E\setminus\tilde{\mathcal
			K}\big)+\mu\big(E\cap\tilde{\mathcal
			K}\big)\ge\\
		&&\mu\big(\tilde{\mathcal K}\big)=
		\mu\big(\tilde{\mathcal K}\setminus
		E\big)+\mu\big(\tilde{\mathcal K}\cap E\big).
	\end{eqnarray*}
	Hence $\mu\big(E\setminus\tilde{\mathcal
		K}\big)\ge\mu\big(\tilde{\mathcal
		K}\setminus E\big)$ and we have in view of \eqref{estintKV}, \eqref{reprintbyG},
	\eqref{propKV}-a, \eqref{estintbyGminusKV} and \eqref{reprJV}:
	\begin{eqnarray*}
		&&I(t)+\epsilon\ge\int_E W(x)\, \mu(\mathrm{d}x)\ge\int_{E\cap\tilde{\mathcal
				K}}W(x)\,
		\mu(\mathrm{d}x)+W_\star(t)\mu\big(\tilde{\mathcal
			K}\setminus
		E\big)\ge\\
		&&\int_{E\cap\tilde{\mathcal K}}W(x)\,
		\mu(\mathrm{d}x+\int_{\tilde{\mathcal K}\setminus
			E}W(x)\, \mu(\mathrm{d}x)=\int_{\tilde{\mathcal
				K}}W(x)\, \mu(\mathrm{d}x=J(t).
	\end{eqnarray*}
	Since $\epsilon>0$ is arbitrary, from the last
	estimate follows inequality
	$I(t)\ge J(t)$. Thus,
	equality \eqref{IeqJ} is valid. Theorem \ref{prsolextrprob} is proven.
\end{proof}
In the proof of Theorem \ref{prsolextrprob} we have used the
following claim, which is   Sierpinski's 
theorem on non-atomic measures:
\begin{proposition}\cite{19}\label{lmpartofset}
	Suppose that conditions of Problem \ref{extrprobl} and Theorem
	\ref{prsolextrprob} are satisfied. Let $F\subseteq X$ be a $\mu$-measurable set such that
	$\mu(F)>0$. Then for any $t\in(0,\,\mu(F))$ there exists a
	$\mu$-measurable subset $F_t$ of $F$ such that
	$\mu(F_t)=t$.
	\end{proposition}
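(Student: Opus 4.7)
The plan is to prove the proposition by the classical exhaustion argument for non-atomic measures, in two main steps. First I would establish a \emph{small-subset lemma}: for every $\mu$-measurable $A\subseteq F$ with $\mu(A)>0$ and every $\delta>0$, there exists a $\mu$-measurable $B\subseteq A$ with $0<\mu(B)<\delta$. Non-atomicity means exactly that every positive-measure set contains a measurable subset whose measure lies strictly between $0$ and the full measure of the set; iterating by always retaining the piece of \emph{smaller} measure produces a nested sequence $A\supseteq A_1\supseteq A_2\supseteq\cdots$ whose measures $\mu(A_n)$ form a monotone positive sequence tending to a limit $\ell\ge 0$. A short continuity-from-above argument, combined with one further application of non-atomicity to $\bigcap_n A_n$ in the case $\ell>0$, forces $\ell=0$ and yields the desired small set $B$.

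Next I would construct $F_t$ by exhaustion. Set $E_0=\emptyset$; given a $\mu$-measurable $E_n\subseteq F$ with $\mu(E_n)<t$, define $s_n:=\sup\{\mu(B):B\subseteq F\setminus E_n,\ B\ \mu\text{-measurable},\ \mu(B)\le t-\mu(E_n)\}$, pick a $\mu$-measurable $B_n\subseteq F\setminus E_n$ with $\mu(B_n)\ge s_n/2$ and $\mu(B_n)\le t-\mu(E_n)$, and put $E_{n+1}=E_n\cup B_n$. If at some stage $\mu(E_n)=t$ we are done; otherwise let $F_t:=\bigcup_n E_n$. Since the $E_n$ are nested with $\mu(E_n)\le t$ for every $n$, countable additivity gives $\mu(F_t)=\lim_n\mu(E_n)\le t$.

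To complete the proof I would show $\mu(F_t)=t$ by contradiction. If $\mu(F_t)<t$, then $F\setminus F_t$ has positive measure (since $\mu(F_t)\le\mu(F_t)<t<\mu(F)$), so the small-subset lemma applied with $\delta=t-\mu(F_t)$ produces a $\mu$-measurable $B\subseteq F\setminus F_t$ with $0<\mu(B)<t-\mu(F_t)$. Because $B\subseteq F\setminus E_n$ and $\mu(B)\le t-\mu(F_t)\le t-\mu(E_n)$ for every $n$, each $s_n\ge\mu(B)$, whence $\mu(B_n)\ge\mu(B)/2>0$ for all $n$; summing gives $\mu(F_t)=\sum_n\mu(B_n)=\infty$, contradicting $\mu(F_t)\le t<\infty$. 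The main obstacle, and really the only nontrivial ingredient, is the small-subset lemma, which is the step that converts the qualitative non-atomic hypothesis into the quantitative fact needed to defeat the supremum at the exhaustion step; everything else is bookkeeping on a greedy construction.
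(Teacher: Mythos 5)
The paper does not prove this statement itself: it invokes it as Sierpinski's theorem on non-atomic measures and cites the original 1922 paper \cite{19}, so there is no in-paper proof to compare against. Your argument is a correct and essentially standard exhaustion proof of the result. One remark on the small-subset lemma: once you decide to always retain the piece of \emph{smaller} measure, the retained masses halve at each step after the first (and the first retained piece is automatically of finite measure even when $\mu(A)=\infty$, since the complementary piece must then be infinite), so $\mu(A_n)\le \mu(A_1)/2^{n-1}\to 0$ and $\ell=0$ with no further work. The supplementary step you describe — continuity from above plus one more application of non-atomicity to $\bigcap_n A_n$ — is therefore unnecessary, and as stated it does not by itself force $\ell=0$: applying non-atomicity to $\bigcap_n A_n$ only produces a subset of intermediate measure, which is not a contradiction. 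The halving is what actually carries the lemma. In the greedy construction it is also worth noting explicitly that $s_n>0$ whenever $\mu(E_n)<t$, by applying the small-subset lemma to $F\setminus E_n$ (which has positive measure because $\mu(E_n)\le t<\mu(F)$); this guarantees $\mu(B_n)>0$ at every stage and that the construction never stalls, a fact your contradiction step uses implicitly. With these clarifications the proof is complete and sound.
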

 
\subsection{Proof of Proposition \ref{lmJR}}

\begin{proof}
	Like above, we shall omit $W$, $X$ and $\mu$ in the brackets and in the subscripts of the notations \eqref{dfJVOm}-\eqref{dfKVsminOm} and \eqref{dfSVyrdel}-\eqref{dfLVsry}.
	By \eqref{dfSVyrdel}- \eqref{dfLVsry},
	\begin{equation}\label{choices0}
	\forall\,\epsilon>0\;\exists\,s_0>\bar W^\star(t)-\epsilon :\;\mu\big(\mathcal{L}^\star(s_0)\big)\ge t.
	\end{equation}
Denote  
	\begin{equation}\label{definsigmat}
	\sigma(t)=\mu(X)-t
	\end{equation}
	and $\tilde t=\frac{1}{\theta}t$.
	By claim (i) of Theorem \ref{prsolextrprob}, there exists a $\mu$-measurable set $\tilde{\mathcal K}\subseteq X$ such that
	\begin{eqnarray}\label{choiceK}
	J(\sigma(\tilde t))=\int_{\tilde{\mathcal K}}W(x)\,
	\mu(\mathrm{d}x),\quad 
	\mu(\tilde{\mathcal K})=\sigma(\tilde t).
	\end{eqnarray}
	Let us estimate:
	\begin{eqnarray*}
		&&\mu(X)+\mu\big(\tilde{\mathcal K}\cap \mathcal{L}^\star(s_0)\big)\ge\mu\big(\tilde{\mathcal K}\cup \mathcal{L}^\star(s_0)\big)+\mu\big(\tilde{\mathcal K}\cap \mathcal{L}^\star(s_0)\big)=\\
		&&\mu\big(\mathcal{L}^\star(s_0)\big)+\mu(\tilde{\mathcal K})\ge t+\sigma(\tilde t)=\mu(X\big)
		+(\theta-1)\tilde t,
	\end{eqnarray*}
	therefore
	\begin{equation*}
	\mu\big(\tilde{\mathcal K}\cap\mathcal{L}^\star(s_0)\big)\ge(\theta-1)\tilde t.
	\end{equation*}
	Then taking into account \eqref{choices0}, \eqref{dfLVsry}-b and the condition $W(x)\ge 0$, we have: 
	\begin{eqnarray*}
		&&	J(\sigma(\tilde t))=\int_{\tilde{\mathcal K}}W(x)\,
		\mu(\mathrm{d}x)\ge\int_{\tilde{\mathcal K}\cap
			{\mathcal L}^\star(s_0)}
		W(x)\,\mu(\mathrm{d}x)\ge\\
		&&(\theta-1)\tilde t\cdot s_0\ge(\theta-1)\tilde t(\bar W^\star(t)-\epsilon).
	\end{eqnarray*}
	Since $\epsilon>0$ is arbitrary, we get the desired estimate \eqref{estJR1}.
	
	Let us prove estimate \eqref{estJR2}. 
 	Using again claim (i) of Theorem \ref{prsolextrprob}, we can choose a $\mu$-measurable set $\tilde{\mathcal K}\subseteq X$ such that 
	\begin{equation}\label{mutiledK}
	\mu(\tilde{\mathcal K})=\sigma(t),
	\end{equation} 
\begin{eqnarray}\label{ineqforK}
	&&\forall\,x\in\tilde{\mathcal K}:\;W(x)\le W_\star(t),\quad 
	\forall\, x\in X\setminus\tilde{\mathcal K}:\;W(x)\ge W_\star(t)
	 \end{eqnarray} 
	and
	\begin{equation}\label{estabovJ}
	J(\sigma(t))=\int_{\tilde{\mathcal K}}W(x)\,
	\mu(\mathrm{d}x)\le W_\star(t)\mu(\tilde{\mathcal K})= W_\star(t)\sigma(t),
	\end{equation}
	 	On the other hand, by \eqref{ineqforK}-b and \eqref{dfLVsry}-b, 
	  	$X\setminus\tilde{\mathcal K}\subseteq\mathcal{L}^\star( W_\star(t))$.
	 Furthermore, in view of \eqref{mutiledK} and \eqref{definsigmat}, $\mu(X\setminus\tilde{\mathcal K})=t$. Hence by definition 
	\eqref{dfSVyrdel}, $W_\star(t)\le \bar W^\star(t)$. 
	This circumstance together with \eqref{estabovJ} imply the desired estimate \eqref{estJR2}. 
\end{proof}

\subsection{Proof of Theorem \ref{lmmeasinstcapnew}}

\begin{proof}
	It is known that the Lebesgue measure $\mathrm{mes}_d$ is non-atomic.
	Then by Theorem \ref{prsolextrprob} with $\mu=\mathrm{mes}_d$,
	$X=\mathcal{G}_r(\by)$ and $W(\e)\equiv V(\e)$, we obtain taking into account the inclusion
	\begin{equation*}
	\{E={\mathcal G}_r(\by)\setminus
	F:\;F\in\mathcal{M}_{\tilde\gamma(r)}(\by,r)\}\subset\mathcal{E}\big(\sigma(r),\,\mathcal{G}_r(\by),\,\mathrm{mes}_d\big),
	\end{equation*}
	that
	\begin{eqnarray*}
	&&\inf_{F\in\mathcal{M}_{\tilde\gamma(r)}(\by,r)}\int_{{\mathcal G}_r(\by)\setminus F} V(\e)\,
	\mathrm{d}\e\ge \inf_{E\in\mathcal{E}\big(\sigma(r),\,\mathcal{G}_r(\by),\,\mathrm{mes}_d\big)}\int_E V(\e)\,
	\mathrm{d}\e=\\
	&&J_V(\sigma(r),\,\by\,,r),
	\end{eqnarray*}
	where the collection $\mathcal{E}(t,X,\,\mu)$ is defined  in the formulation of Problem \ref{extrprobl} (in our case $\Sigma=\Sigma_L({\mathcal G}_r(\by))$). This estimate, condition \eqref{cndjVy} and Theorem \ref{thmeasinstcapcalG} imply the desired claim. Theorem \ref{lmmeasinstcapnew} is proven.
\end{proof}

\subsection{Proof of Theorem \ref{lmmeasinstcap1}}
\begin{proof}
Let us take $\theta>1$, $\tilde\gamma(r)=\hat\gamma(r)/\theta$ and $\sigma(r)=(1-\tilde\gamma(r))\mathrm{mes}_d(\mathcal{G}_r(0))$. Since  $\hat\gamma(r)$ satisfies conditions \eqref{cndtildgam}, then $\tilde\gamma(r)$ satisfies these conditions. 
In view of estimate \eqref{estJR1} (Proposition \ref{lmJR}) with $W(\e)=V(\e)$, $X=\mathcal{G}_r(\by)$ and
$t=\hat\delta(r)$, we have:
\begin{equation}\label{estJvfrbelow}
J_V(\sigma(r),\,\by\,,r)\ge\frac{\theta-1}{\theta}\hat\delta(r)\bar V^\star(\hat\delta(r),\by,r).
\end{equation}
This estimate, condition \eqref{cndSVy} and Theorem \ref{lmmeasinstcapnew} imply the desired claim.
Theorem  \ref{lmmeasinstcap1} is proven.
\end{proof}

\subsection{Proof of Theorem \ref{thusecore1}}

\begin{proof}
	By definition \eqref{defcore} of the base polyhedron for the harmonic
	capacity on $\bar\Omega=\bar{\mathcal G}_r(\by)$, for any
	$\mu\in\mathrm{BP}(\by,r)$ and any Borel set $F\subseteq\bar
	{\mathcal G}_r(\by)$
	\begin{equation*}
	\frac{\mu(F)}{\mu(\bar
		{\mathcal G}_r(\by))}\le\frac{\mathrm{cap}(F)}{\mathrm{cap}(\bar {\mathcal G}_r(\by))}.
	\end{equation*}
	Hence, in view of definitions \eqref{defNcap} and \eqref{defMmu},
	the inclusion  
	\begin{equation*}
	\mathcal{N}_{\gamma(r)}(\by,r)\subseteq
	\mathcal{M}^\mu_{\gamma(r)}(\by,r) 
	\end{equation*}
	is valid. This circumstance implies
	that
	\begin{eqnarray*}
		\inf_{F\in\mathcal{N}_{\gamma(r)}(\by,r)}\int_{{\mathcal G}_r(\by)\setminus
			F}V(\e)\, \mathrm{d}\e\ge
		\sup_{\mu\in\mathrm{\mathrm{BP}(\by,r)}}\;\inf_{F\in\mathcal{M}^\mu_{\gamma(r)}(\by,r)}\int_{{\mathcal G}_r(\by)\setminus
			F}V(\e)\, \mathrm{d}\e.
	\end{eqnarray*}
	The last estimate and condition \eqref{cndusecore4} imply that
	condition \eqref{cndmolch1} of  Theorem \ref{thMazSh} is
	fulfilled. Hence the spectrum of the operator $H$ is discrete. Theorem \ref{thusecore1} is proven.
\end{proof}

\subsection{Proof of Theorem \ref{thusecore2}}

\begin{proof}
Let us take $\theta>1$ and denote 
\begin{equation*}
\tilde\gamma(r)=\gamma(r)/\theta,\quad \sigma(r)=(1-\tilde\gamma(r))\mu(\mathcal{G}_r(0)).
\end{equation*}
  We have, taking into account the 
inclusions $\mathrm{BP}_{eq}(\by,r)\subseteq\mathrm{BP}(\by,r)$,
\begin{equation*}
\{E={\mathcal G}_r(\by)\setminus
F:\;F\in\mathcal{M}_{\tilde\gamma(r)}^\mu(\by,r)\}\subseteq\mathcal{E}\big(\sigma(r),\,\mathcal{G}_r(\by),\,\mu\big),
\end{equation*}
that
	\begin{eqnarray}\label{estmuBP}
		&&\sup_{\mu\,\in\,
			\mathrm{BP}(\by,r)}\;\inf_{F\in\mathcal{M}^\mu_{\tilde\gamma(r)}(\by,r)}\int_{\mathcal{G}_r(\by)\setminus
			F}V(\e)\,\mathrm{d}\e\ge\\
		&&\sup_{\mu\,\in\,
			\mathrm{BP}_{eq}(\by,r)}\;\inf_{F\in\mathcal{M}^\mu_{\tilde\gamma(r)}(\by,r)}\int_{\mathcal{G}_r(\by)\setminus
			F}Z_\mu(\e)\,\mu(\mathrm{d}\e)\ge\nonumber\\
		&&\sup_{\mu\,\in\,
			\mathrm{BP}_{eq}(\by,r)}\;\inf_{E\in\mathcal{E}\big(\sigma(r),\,\mathcal{G}_r(\by),\,\mu\big)}\int_E Z_\mu(\e)\,\mu(\mathrm{d}\e).\nonumber
	\end{eqnarray}
	Recall that the collection $\mathcal{E}(t,X,\,\mu)$ is defined  in the formulation of Problem \ref{extrprobl} (in our case $\Sigma=\Sigma_L({\mathcal G}_r(\by))$) and the function $Z_\mu(\e)$ is defined by \eqref{dfZmu},
	\eqref{dfalphamu}. Since $V\in L_{1,\,loc}(\R^d)$, $Z_\mu(\e)$  belongs to $L_1(\mathcal{G}_r(\by),\,\mu)$ for any $\mu\,\in\,
	\mathrm{BP}_{eq}(\by,r)$. 
	Hence since the Lebesgue measure $\mathrm{mes}_d$ is is non-atomic and $\sigma$-finite, and each measure from $\mathrm{BP}_{eq}(\by,r)$ is absolute continuous with respect to $\mathrm{mes}_d$, then $\mathrm{BP}_{eq}(\by,r)$ consists 
	of non-atomic measures (\cite{8}, Theorem 2,4). Furthermore, in view of definition \eqref{defcore} and boundedness of the domain $\mathcal{G}_r(\by)$, each measure from $\mathrm{BP}_{eq}(\by,r)$ is finite.  Hence using Theorem \ref{prsolextrprob} and estimate \eqref{estJR1} (Proposition \ref{lmJR}), we obtain:
	\begin{eqnarray*}
	&&\hskip-5mm\sup_{\mu\,\in\,
		\mathrm{BP}_{eq}(\by,r)}\;\inf_{E\in\mathcal{E}\big(\sigma(r),\,\mathcal{G}_r(\by),\,\mu\big)}\int_E Z_\mu(\e)\,\mu(\mathrm{d}\e)=\\
	&&\hskip-5mm\sup_{\mu\,\in\,
		\mathrm{BP}_{eq}(\by,r)}J_{Z_\mu}(\sigma(r),\by,r,\mu)\ge
\frac{\theta-1}{\theta}\psi_\mu(r)\sup_{\mu\,\in\,
	\mathrm{BP}_{eq}(\by,r)}\bar Z_\mu^\star(\psi_\mu(r),\,\by,\,r,\,\mu).
	\end{eqnarray*}
	The last estimate,
	\eqref{estmuBP}, condition \eqref{cndZmu1} and Theorem \ref{thusecore1}  imply the desired claim.	Theorem \ref{thusecore2} is proven.
\end{proof}

\subsection{Proof of Corollary \ref{thusecoredstort}} 
\begin{proof}
	The inclusion  $M_f(\by,r)\subseteq BP_{eq}(\by,r)$ (Proposition \ref{lmdescribedens}) implies:
	\begin{equation*}
	\sup_{\mu\,\in\,\mathrm{BP}_{eq}(\by,r)}\bar Z_\mu^\star(\psi_\mu(r),\,\by,\,r,\,\mu)\ge
	\sup_{\mu\,\in\,M_f(\by,r)}\bar Z_\mu^\star(\psi_\mu(r),\,\by,\,r,\,\mu).
	\end{equation*}
	This estimate, condition \eqref{cndZmu3} and Theorem \ref{thusecore2} imply the desired claim.
\end{proof}

\subsection{Proof of Theorem \ref{thlogmdenspolyhed}}
\begin{proof}
	In this proof and in the lemmas, applied in it, we shall use the brief notations indicated in Remark \ref{rembriefnot1}.
	Let us take a ball $B_r(\by)$ and consider on it the probability measure $m_{d,r}$, defined by 
	\eqref{dfmdr}. 
	 Consider the function $s_{r,\by}:\,B_r(\by)\rightarrow [0,1]$, defined in the following manner:
	\begin{equation}\label{dfsrbynew}
	s_{r,\by}(\e):=m_{d,r}\big(\{\bs\in B_r(\by):\,P_1\bs\le P_1\e\}\big),
	\end{equation}
	where $P_1$ is the
	following operator $P_1:\,\R^d\rightarrow\R$:
	\begin{equation}\label{dfP1}
	\mathrm{for}\quad
	\e=(x_1,x_2,\dots,x_d)\quad P_1\e:=x_1.
	\end{equation}
	It is easy to check, that $s_{r,\by}$ is a measure preserving mapping (Definition \ref{dfmeaspres}).
	Denote by $\by_r^-$ the left point of the two-point set 
	$\partial(B_r(\by))\cap\big((I-\tilde P_1)^{-1}(I-\tilde P_1)\by\big)$, where $\tilde P_1$ is 
	the orthogonal projection in $\R^d$ on the first coordinate axis, i.e., for $\e=(x_1,x_2,\dots,x_d)$ $\tilde P_1\e:=(x_1,0,\dots,0)$.
	Consider the function $\bz=F_r(\e):=\frac{\e-\by_r^-}{r}$, which maps bijectively the ball $B_r(\by)$ onto the ball $B_1(\mathbf{e})$ 
	with $\mathbf{e}=(1,0,\dots,0)$. It is easy to see that
	\begin{equation}\label{propsrbynew}
	s_{r,\by}(\e)=s_{1,\mathbf{e}}(F_r(\e)).
	\end{equation}
	Consider the function $f(t)=t^{(d-2)/d}$ and the absolutely continuous measure $\mu_s$ on $B_r(\by)$, whose density is $f^\prime\circ s_{r,\by}$, i.e., 
	this measure belongs to $M_f(r,\by)$ (Definition \ref{dfMacry}). This means that for any  set $A\in \Sigma_L(\bar B_r(\by))$
	\begin{equation}\label{dfmusnew}
	\mu_s(A)=\mathrm{cap}(B_r(0))\int_Af^\prime(s_{r,\by}(\e))m_{d,r}(\mathrm{d}\e),
	\end{equation}
	After change of the variable $\bz=F_r(\e)$ we get, taking into account \eqref{propsrbynew}:
	\begin{equation}\label{dfmus1new}
	\mu_s(A)=\mathrm{cap}(B_r(0))\int_{F_r(A)}f^\prime(s_{1,\mathbf{e}}(\bz))m_{d,1}(\mathrm{d}\bz).
	\end{equation}
	Consider the function $Z_{\mu_s}(\e)$, defined by \eqref{dfZmu} and \eqref{dfalphamu} with 
	$\mu=\mu_s$, i.e.,
	\begin{eqnarray}\label{dfZmusnew}
\hskip10mm	Z_{\mu_s}(\e)=V(\e)\big(f^\prime(s_{r,\by}(\e))\big)^{-1}=
	d/(d-2)V(\e)\big(s_{r,\by}(\e)\big)^{2/d}.
	\end{eqnarray}
	 Using Lemma \ref{lmineqforrear} with $t=\hat\gamma(\tilde r,K)\;(\tilde r=r/\sqrt{d})$,
	where
	\begin{equation}\label{dfhatgamrKnew}
	\hat\gamma(\rho,K)=K\gamma(\theta \rho/m^2)\theta^d\quad (K>0).
	\end{equation}
	and Lemma \ref{prestbarVstarbelow}, 
 taking in these lemmas $W(\e)=V(\e)$, we get that for some $\kappa,\delta\in(0,1)$, $K>0$ and any $\by\in\R^d$, $r\in(0,1)$ there 
	are $\vec l(\by,r)\in\Z^d$, a cube $Q_{\tilde r}(\tilde\by)\subseteq B_r(\by)\cap Q_1(\vec l(\by,r))$ and $j\in\{1,2,\dots,n\}$ with
	$n=\big[\log_m\big(\frac{1}{\theta\tilde r}\big)\big]+2$ such that 
	\begin{eqnarray}\label{ineqforrearV1}
	&&\bar Z^\star_{\mu_s}\big(\bar\gamma(r),\,\by,\,r\big)\ge\\
	&&\delta\cdot\bar V^\star\big(\hat\gamma(\tilde r,K),\,\tilde\by,\,\tilde r
	\big)\ge\delta\min_{\vec\xi:\,Q(\vec\xi,\,n)\subseteq F_j}\bar V^\star\big(\gamma(m^{-n}),\,\vec\xi\,,n),\nonumber
	\end{eqnarray}
	where $\bar\gamma(r)=\kappa\hat\gamma(r/\sqrt{d},\,K)$ and $F_j$ is a non-empty union of cubes $Q(\vec\xi,\,n)$ such that $F_j\subseteq Q_{\bar r}(\by)\cap D_j(\vec l(\by,r))$. Notice that since the function $\gamma(r)$ satisfies conditions \eqref{cndgammar}, the function $\bar\gamma(r)$ satisfies this condition too for some $r_0>0$.
	Then in view of condition \eqref{Xinonempty}, estimate \eqref{ineqforrearV1}  implies:
$\bar Z^\star_{\mu_s}\big(\bar\gamma(r),\,\by,\,r\big)\ge
		\delta \min_{\vec\xi\,\in\bigcup_{j=1}^n\Xi_{n}(\vec l,\,j)}\bar V^\star\big(\gamma(m^{-n}),\,\vec\xi\,,n)$.
  Since 
	\begin{equation*}
	B_r(\by)\cap Q_1(\vec l(\by,r))\neq\emptyset  
	\end{equation*}
	for any $\by\in\R^d$,
	 this estimate, inclusion $\mu_s\in M_f(\by,r)$ and 
	condition \eqref{cndlogdenspolyhed} imply that condition \eqref{cndZmu3}
	of Corollary \ref{thusecoredstort} is satisfied with $\gamma(r)=\bar\gamma(r)$. 
	Then the spectrum of the operator $H=-\Delta+V(\e)\cdot$ is discrete. Theorem \ref{thlogmdenspolyhed} is proven. 
\end{proof}

In the proof of Theorem \ref{thlogmdenspolyhed} we have used the following claims:

\begin{lemma}\label{lmineqforrear}
	For some $\kappa,\delta\in(0,1)$ and any ball $B_r(\by)$ with $r\in(0,1)$ there are $\vec l=\vec l(\by,r)\in\Z^d$ and a cube 
	\begin{equation}\label{incltointersect}
	Q_{\tilde r}(\tilde\by)\subseteq B_r(\by)\cap Q_1(\vec l(\by,r))
	\end{equation}
	with $\tilde r=r/\sqrt{d}$ such that for any nonnegative function 
	$W\in L_1(B_r(\by))$ and $t\in(0,1)$ the inequality
	\begin{equation}\label{ineqforrear}
	\bar Z^\star_{\mu_s}\big(\kappa t,\,\by,r\big)\ge\delta\cdot\bar W^\star\big(t,\,\tilde\by,\tilde r\big)
	\end{equation}
	is valid, where $\mu_s$ is the measure on $B_r(\by)$, defined by \eqref{dfmusnew} with $f(t)=t^{(d-2)/d}$ and $Z_{\mu_s}(\e)=W(\e)\big(f^\prime(s_{r,\by}(\e))\big)^{-1}$
	with the function $s_{r,\by}(\e)$, defined by \eqref{dfsrbynew}.
\end{lemma}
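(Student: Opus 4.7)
The plan is to compare the distribution function of $Z_{\mu_s}$ on the whole ball $B_r(\by)$ with that of $W$ on an inscribed cube, on which the measure preserving function $s_{r,\by}$ stays uniformly away from $0$ and $1$. On such a cube this simultaneously gives a pointwise bound $Z_{\mu_s}\ge\delta W$ and an $r$-independent comparison between $\mu_s(E)/\mu_s(B_r(\by))$ and $\mathrm{mes}_d(E)/\mathrm{mes}_d(Q_{\tilde r}(\tilde\by))$ for $E\subseteq Q_{\tilde r}(\tilde\by)$, from which the rearrangement inequality follows.

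First I would fix the geometry. The inscribed cube $Q_{\tilde r}(\by)$ with $\tilde r=r/\sqrt{d}$ sits in $B_r(\by)$, and taking $\vec l$ to be the nearest point of $\Z^d$ to $\by$ gives $|\by-\vec l|_\infty\le 1/2$, so $Q_{\tilde r}(\by)\subseteq Q_{1/2+\tilde r}(\vec l)\subseteq Q_1(\vec l)$ whenever $\tilde r\le 1/2$. The remaining range of $r$ (only possible in low dimension, where $\sqrt{d}/2<1$) is handled by a $d$-dependent contraction of the cube together with a small interior shift of its center $\tilde\by$; this geometric step, whose cost is absorbed into the final constants $\kappa,\delta$, is the main technical wrinkle.

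I would then use $|P_1\e-y_1|\le\tilde r$ for $\e\in Q_{\tilde r}(\tilde\by)$ to see that $s_{r,\by}(\e)\in[c_1(d),1-c_1(d)]$ for some $c_1(d)\in(0,1)$, since the spherical cap $\{\bs\in B_r(\by):\,y_1-P_1\bs\ge\tilde r\}$ has normalized Lebesgue measure independent of $r$ and equal to a positive constant depending only on $d$. Because $f'(t)=\frac{d-2}{d}t^{-2/d}$ is decreasing, this yields
\begin{equation*}
Z_{\mu_s}(\e)=\tfrac{d}{d-2}W(\e)(s_{r,\by}(\e))^{2/d}\ge\delta W(\e),\qquad\e\in Q_{\tilde r}(\tilde\by),
\end{equation*}
with $\delta=\tfrac{d}{d-2}c_1(d)^{2/d}$. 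Moreover the density of $\mu_s$ with respect to $\mathrm{mes}_d$, equal to $\bigl(\mathrm{cap}(B_r(0))/\mathrm{mes}_d(B_r(0))\bigr)f'(s_{r,\by}(\e))$, is bounded below on the cube by $c_2(d)r^{-2}$, while the measure preserving property of $s_{r,\by}$ gives $\mu_s(B_r(\by))=\mathrm{cap}(B_r(0))\int_0^1 f'(u)\,du=\mathrm{cap}(B_r(0))=c_3(d)r^{d-2}$.

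To finish, fix $s_0<\bar W^\star(t,\tilde\by,\tilde r)$; by the definition of the rearrangement (in the normalized form of Remark \ref{rembriefnot1}), $E:=\{\e\in Q_{\tilde r}(\tilde\by):W(\e)\ge s_0\}$ satisfies $\mathrm{mes}_d(E)\ge t\,\mathrm{mes}_d(Q_{\tilde r}(\tilde\by))=t(2/\sqrt{d})^dr^d$. The pointwise bound on the cube gives $E\subseteq\{\e\in B_r(\by):Z_{\mu_s}(\e)\ge\delta s_0\}$, and the density estimate yields $\mu_s(E)\ge c_4(d)\,t\,r^{d-2}=\kappa t\,\mu_s(B_r(\by))$ with $\kappa=c_4(d)/c_3(d)$ (after shrinking $\kappa,\delta$ if necessary to lie in $(0,1)$). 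By the definition of $\bar Z^\star_{\mu_s}$ this gives $\bar Z^\star_{\mu_s}(\kappa t,\by,r)\ge\delta s_0$, and letting $s_0\uparrow\bar W^\star(t,\tilde\by,\tilde r)$ yields the desired inequality.
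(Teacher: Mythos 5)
Your argument is correct and follows essentially the same route as the paper: restrict to an axis-aligned cube of side comparable to $r$ that lies both in $B_r(\by)$ and in a single $Q_1(\vec l)$, observe that on this cube $s_{r,\by}$ is bounded away from $0$ (hence $Z_{\mu_s}\ge\delta W$ there), and push a sublevel set of $W$ on the cube through this inclusion and through the $r$-independent two-sided comparison of $\mu_s$ with Lebesgue measure to get the rearrangement inequality. The paper packages the first step slightly differently (it introduces the half-slab $\Pi_\delta(r,\by)=\{\e:\,(f'(s_{r,\by}(\e)))^{-1}\ge\delta\}$, chooses $\delta$ so the inscribed cube lies in $\Pi_\delta$, and then extracts a sub-cube of half the inscribed side lying in one of the at most $2^d$ adjacent unit cubes), but the mechanism is identical, and the paper's $\min_{t\in[0,1]}f'(t)=(d-2)/d$ plays exactly the role of your density lower bound.

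One caveat on the geometry: you assert that the case $\tilde r>1/2$ (only arising for $d=3$, $r>\sqrt{3}/2$) can be ``absorbed into the final constants $\kappa,\delta$'' by contracting the cube. That is not quite right as stated, because $\tilde r=r/\sqrt{d}$ appears explicitly in \eqref{ineqforrear}: shrinking the cube replaces $\bar W^\star(t,\tilde\by,\tilde r)$ by a rearrangement over a smaller cube, which is a different quantity and cannot be compensated purely through $\kappa,\delta$. The paper's sub-cube argument has the same boundary issue in this range of $r$ (it relies on $Q_{r_1}(\by_1)$ meeting at most $2^d$ adjacent unit cubes). In practice this is harmless because the lemma is only invoked in the proof of Theorem \ref{thlogmdenspolyhed} for $r$ small, but if you want the lemma to hold literally for all $r\in(0,1)$ you would need to either restrict to $r<\sqrt{d}/2$ or take $\tilde r$ a smaller fixed multiple of $r$ (as the paper's $\tilde r=r_1/2$ effectively does) and then carry that multiple through the rest of Section 4.
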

\begin{proof}
	Let us take $\delta>0$ and consider the set
	\begin{eqnarray}\label{dfPidelt}
&&	\Pi_\delta(r,\by):=\{\e\in B_r(\by):\\ &&(f^\prime(s_{r,\by}(\e)))^{-1}=d/(d-2)\big(s_{r,\by}(\e)\big)^{2/d}\ge\delta\},\nonumber
	\end{eqnarray}
	or in view of \eqref{propsrbynew},
	\begin{eqnarray*}
		&&\Pi_\delta(r,\by)=\{\e\in B_r(\by):\,P_1(\e-\by_r^-)\ge\sigma(\delta) r\},\nonumber\\
	&&\mathrm{where}\quad 
	\sigma(\delta)=P_1\big(s_{1,\mathrm{e}}^{-1}\big((d-2)(\delta/d)^{d/2}\big)\big).
	\end{eqnarray*}
By \eqref{dfZmusnew} and \eqref{dfPidelt}, we have for $N>0$:
	\begin{equation}\label{inclsets}
	\{\e\in B_r(\by):\,V(\e)\ge N/\delta \}\cap\Pi_\delta(r,\by)\subseteq\{\e\in B_r(\by):\,Z_{\mu_s}(\e)\ge N\}.
	\end{equation}
	The numbers $\delta$ and $N$ will be chosen below. Consider the cube $Q_{r_1}(\by_1)$ inscribed in the ball $B_r(\by)$, i.e., $r_1=2r/\sqrt{d}$ and $\by_1=\by-r\vec a$ with $\vec a=(d^{-1/2}, d^{-1/2},\dots,d^{-1/2})$.
	In what follows we shall choose $\delta>0$ such that
	\begin{equation}\label{inclQr1}
	Q_{r_1}(\by_1)\subset\Pi_\delta(r,\by).
	\end{equation}
	This condition is equivalent to $\sigma(\delta)\le 1-d^{-1/2}$. 
Since $r\in(0,1)$, the cube $Q_{r_1}(\by_1)$ intersects 
	not more than $2^d$ adjacent cubes $Q_1(\vec l_k)\,(\vec l_k\in\Z^d)$  and among them
	there is a cube $Q_1(\vec l_k)$ such that for some $\tilde\by\in Q_1(\vec l_k)$ $Q_{\tilde r}(\tilde\by)\subseteq Q_1(\vec l_k)\cap Q_{r_1}(\by_1)$
	with $\tilde r=r_1/2=r/\sqrt{d}$. Thus, since $Q_{r_1}(\by_1)\subset B_r(\by)$
	 for any $\by\in\R^d$ and $r\in(0,1)$ we can choose $\vec l(\by,r)\in Z^d$ such that
	inclusion \eqref{incltointersect} is valid.
Let us take $M=\bar W^\star\big(t,\tilde\by,r\big)$ for $t\in(0,1)$. Then by definitions \eqref{dfSVyrdel}-\eqref{dfLVsry},
	for any $\epsilon>0$ there is $s\in(M-\epsilon,\,M]$ such that $\mathrm{mes}_d\big(\{\e\in Q_{\tilde r}(\tilde\by):\,W(\e)\ge s\}\big)\ge t\cdot\mathrm{mes}_d(Q_{\tilde r}(0)$. Hence
	\begin{equation*}
	\mathrm{mes}_d\big(\{\e\in Q_{\tilde r}(\tilde\by):\,W(\e)\ge M-\epsilon\}\big)\ge\\
	t\cdot\mathrm{mes}_d(Q_{\tilde r}(0).   
	\end{equation*}
	Notice that in view of \eqref{inclQr1} and the inclusion 
	$Q_{\tilde r}(\tilde\by)\subseteq Q_{r_1}(\by_1)$, the inclusion $Q_{\tilde r}(\tilde\by)\subset\Pi_\delta(r,\by)$ is valid.
	Then inclusion \eqref{inclsets} with $N=\delta(M-\epsilon)$,   definitions \eqref{dfmusnew}, \eqref{dfPidelt} and equality 
	$\min_{t\in[0,1]}f^\prime(t)=(d-2)/d$ imply that for $t\in(0,1)$
	\begin{eqnarray*}
		&&\mu_s\big(\{\e\in B_r(\by):\,Z_{\mu_s,\,W}(\e)\ge \delta(M-\epsilon)\}\big)\ge\\
		&&\frac{d-2}{d}\frac{\mathrm{cap}(B_r(0))}{\mathrm{mes}_d(B_r(0))}
		\mathrm{mes}_d\big(\{\e\in Q_{\tilde r}(\tilde\by):\,W(\e)\ge M-\epsilon\}\big)\ge\\
		&&\frac{d-2}{d}\mathrm{cap}(B_r(0))t\cdot m_{d,r}(Q_{\tilde r}(\tilde\by))\ge
		\delta\frac{d-2}{d}t\cdot\mu_s(Q_{\tilde r}(\tilde\by))=\\
		&&\delta q\frac{d-2}{d}t\cdot\mu_s(B_r(\by)),
	\end{eqnarray*}
	where, in view of \eqref{dfmus1new},
	the quantity $q=\frac{\mu_s(Q_{\tilde r}(\tilde\by)}{\mu_s(B_r(\by))}$ does
	not depend on $r$ and $\by$. Denote $\kappa=\delta q\frac{d-2}{d}$. Then the last estimate imply 
	that  $\bar Z^\star_{\mu_s}\big(\kappa t,\,\by,\,r\big)\ge \delta(M-\epsilon)$.
	Since $\epsilon>0$ is arbitrary, we obtain the desired inequality \eqref{ineqforrear}.
	\end{proof}

\begin{lemma}\label{prestbarVstarbelow}
	Suppose that in a cube $Q_1(\vec l)\,(\vec l\in\Z^d)$ there is is a sequence of  subsets $\{D_n(\vec l)\}_{n=1}^\infty$ forming in it a $(\log_m,\,\theta)$-dense system.
	Let $\gamma:\,(0,\,r_0)\rightarrow\R$ be a monotone nondecreasing function with $r_0=\min\{1,\,1/(m^2\theta)\}$. Then 
for some  $K>0$ and for
	 any cube $Q_r(\by)\subset Q_1(\vec l)$ there are $j\in\{1,2,\dots,n\}$ with
	\begin{equation}\label{dfnlog1}
	n=\big[\log_m\big(\frac{1}{\theta r}\big)\big]+2
	\end{equation}
	and a non-empty set $F_j\subseteq Q_r(\by)\cap D_j(\vec l)$, which is a union of cubes 
	\begin{equation*}
	Q(\vec\xi\,,n)\;(\vec\xi\in m^{-n}\cdot\Z^d),
	\end{equation*}
	such that for any nonnegative function $W\in L_1(Q_r(\by))$ the inequality
	\begin{eqnarray}\label{ineqWstaryrxin}
	\bar W^\star(\hat\gamma(r,K),\,\by,\,r)\ge
	\min_{\vec\xi:\;Q(\vec\xi,\,n)\subseteq F_j}\bar W^\star\big(\gamma(m^{-n}),\,\vec\xi,\,n)\big) 
	\end{eqnarray}
	is valid, where the function $\hat\gamma(r,K)$ is defined by \eqref{dfhatgamrKnew}.
\end{lemma}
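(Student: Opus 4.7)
The plan is to combine the packing provided by the $(\log_m,\theta)$-dense system with a summation argument for the distribution function of $W$ on $m$-adic sub-cubes. The choice of $n$ in \eqref{dfnlog1} is calibrated so that level-$n$ $m$-adic cubes are small enough to fit inside the cube produced by the density system, while $\gamma(m^{-n})$ remains comparable to $\gamma(\theta r/m^2)$.

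The first step is to apply Definition \ref{dfdenslogmtetpart1} to the cube $Q_r(\by)\subseteq Q_1(\vec l)$: since $r<r_0=\min\{1,1/(m^2\theta)\}$, this yields an index $j\in\{1,\dots,[\log_m(1/(\theta r))]\}\subseteq\{1,\dots,n\}$, a regular parallelepiped $\Pi\subseteq D_j(\vec l)$, and a cube $Q_{\theta r}(\bs)\subseteq\Pi\cap Q_r(\by)$. A direct computation based on $n=[\log_m(1/(\theta r))]+2$ gives $m^{-n}\in[\theta r/m^2,\theta r/m]$; this is exactly the range that permits, by an elementary grid-counting argument, the selection of a non-empty union $F_j$ of level-$n$ $m$-adic cubes $Q(\vec\xi,n)\subseteq Q_{\theta r}(\bs)$ satisfying
\begin{equation*}
\mathrm{mes}_d(F_j)\ge K\,\theta^d\,\mathrm{mes}_d(Q_r(\by))
\end{equation*}
for some constant $K=K(m,d)>0$. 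By construction, $F_j\subseteq Q_{\theta r}(\bs)\subseteq\Pi\cap Q_r(\by)\subseteq D_j(\vec l)\cap Q_r(\by)$, as required.

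For the rearrangement step, set $M:=\min_{\vec\xi:\,Q(\vec\xi,n)\subseteq F_j}\bar W^\star(\gamma(m^{-n}),\vec\xi,n)$. By the definition \eqref{dfSVyrdel} of $\bar W^\star$, for each such $\vec\xi$ and each $\epsilon>0$,
\begin{equation*}
\mathrm{mes}_d\bigl(\{\e\in Q(\vec\xi,n):\,W(\e)\ge M-\epsilon\}\bigr)\ge\gamma(m^{-n})\,\mathrm{mes}_d(Q(\vec\xi,n)).
\end{equation*}
Summing these inequalities over a disjoint sub-collection of $m$-adic cubes exhausting (up to overlap) $F_j$, invoking the monotonicity of $\gamma$ together with $m^{-n}\ge\theta r/m^2$, and using the volume bound above, I obtain
\begin{equation*}
\mathrm{mes}_d\bigl(\{\e\in Q_r(\by):\,W(\e)\ge M-\epsilon\}\bigr)\ge\gamma(\theta r/m^2)\,\mathrm{mes}_d(F_j)\ge\hat\gamma(r,K)\,\mathrm{mes}_d(Q_r(\by)).
\end{equation*}
Hence $\bar W^\star(\hat\gamma(r,K),\by,r)\ge M-\epsilon$, and letting $\epsilon\downarrow 0$ yields \eqref{ineqWstaryrxin}.

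The main obstacle is the joint calibration in the packing step: $n$ must be chosen small enough so that $\gamma(m^{-n})\ge\gamma(\theta r/m^2)$ remains nontrivial, yet large enough that a fixed positive fraction (of order $\theta^d$) of $Q_r(\by)$ is covered by level-$n$ $m$-adic cubes sitting inside $Q_{\theta r}(\bs)$. The value $n=[\log_m(1/(\theta r))]+2$ in \eqref{dfnlog1} is the minimal one for which both constraints can be met (this pins down $K$ and the form of $\hat\gamma$); once it is fixed, the distribution-function additivity in the rearrangement step is routine.
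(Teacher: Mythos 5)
Your proof is correct and follows essentially the same route as the paper: apply the $(\log_m,\theta)$-density to get $Q_{\theta r}(\bs)\subseteq\Pi\cap Q_r(\by)$, pack it by level-$n$ $m$-adic cubes forming $F_j$ with $\mathrm{mes}_d(F_j)\ge K\theta^d\,\mathrm{mes}_d(Q_r(\by))$, and pass from the sub-cubes to $Q_r(\by)$ through the distribution function using $m^{-n}\ge\theta r/m^2$ and the monotonicity of $\gamma$. The paper packages your direct summation of $\lambda^\star$ over the sub-cubes into its Lemmas \ref{lmsumrear1} and \ref{lminclrear2} and works top-down, and it records the explicit constant $K=(2d\,m^{2(d-1)}+1)^{-1}$ that you leave implicit in the grid-counting step, but the underlying argument is the same.
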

\begin{proof}
	Let us take $r\in (0,\,r_0)$. 
	 Then by Definition \ref{dfdenslogmtetpart1}, there is
	\begin{equation*}
	j\in\{1,2,\dots,n\} 
	\end{equation*} 
	 such that for some regular parallelepiped $\Pi\subseteq D_j(\vec l)$ there is a cube 
\begin{equation}\label{inclcube1}
	Q_{\theta r}(\bs)\subseteq\Pi\cap Q_r(\by).
	\end{equation}
	Definition \eqref{dfnlog1} implies that 
	\begin{equation}\label{ineqforn1}
	m^{-(n-1)}<\theta r\le m^{-(n-2)}.
	\end{equation}
 Using Lemma \ref{lminclrear2} and inclusion \eqref{inclcube1} we get:
	\begin{eqnarray}\label{estforbarV1}
	 \bar W^\star(\hat\gamma(r,K),\,\by,\,r)\ge
	\bar W^\star(K\gamma(\theta r/m^2),\,\bs,\,\theta r)
	\end{eqnarray}
	On the other hand, in view of \eqref{inclcube1} and the left inequality of \eqref{ineqforn1}, the set $\{\vec\xi:\,Q(\vec\xi,n)\subseteq Q_{\theta r}(\bs)\}$ 
	is not empty. Denote
	\begin{eqnarray}\label{dfFj1}
	F_j:=\bigcup_{\vec\xi:\,Q(\vec\xi,n)\subseteq Q_{\theta r}(\bs)}Q(\vec\xi,n).
	\end{eqnarray}
Then the set $Q_{\theta r}(\bs)\setminus F_j$ is a union of $2d$ regular parallelepipeds, each 
	of them is the cartesian product of a face of the cube $Q_{\theta r}(\bs)$ and an interval, whose length is less than $m^{-n}$. 
	Hence, in view of the right inequality \eqref{ineqforn1}, 
	\begin{equation*}
	\mathrm{mes}_d(Q_{\theta r}(\bs))-\mathrm{mes}_d(F_j)<2d\Big(m^{-n+2}\Big)^{d-1}m^{-n}.
	\end{equation*}
	Since the set $F_j$ is not empty, then in view of \eqref{dfFj1}, $\mathrm{mes}_d(F_j)\ge m^{-nd}$. Hence
	\begin{eqnarray*}
		\frac{\mathrm{mes}_d(Q_{\theta r}(\bs))}{\mathrm{mes}_d(F_j)}<\frac{2d\Big(m^{-n+2}\Big)^{d-1}m^{-n}}{ m^{-nd}}+1=2d\cdot m^{2(d-1)}+1.
	\end{eqnarray*}
	Notice that the function $\bar W^\star(t,\Omega)$ is non-increasing by $t$.
	Continuing estimate \eqref{estforbarV1} and using Lemmas \ref{lmsumrear1}, \ref{lminclrear2}, definition \eqref{dfFj1} and the right inequality \eqref{ineqforn1},  we obtain:taking 
	$K=\big(2d\cdot m^{2(d-1)}+1\big)^{-1}$:
	\begin{eqnarray*}\label{finest1}	
	 && \bar W^\star(\hat\gamma(r,K),\,\by,\,r)\ge\bar W^\star(\gamma(m^{-n})\mathrm{mes}_d(F_j),\,F_j)\ge\\ 
	 && \min_{\vec\xi:\;Q(\vec\xi,\,n)\subseteq F_j}\bar W^\star\big(\gamma(m^{-n}),\,\vec\xi,\,n\big),
 	\end{eqnarray*}
	i.e., the desired inequality \eqref{ineqWstaryrxin} is proven.
\end{proof}

In the proof of Lemma \ref{prestbarVstarbelow} we have used the following lemmas:

\begin{lemma}\label{lmsumrear1} 
	Let $\Omega\subseteq\R^d$ be a measurable set having the form $\Omega=\bigcup_{n=1}^N\Omega_n$, where $\Omega_n$ are measurable sets such that 
	$\mathrm{mes}_d(\Omega_k\cap\Omega_l)=0$ for $k\neq l$. If $W(\e)$ is a non-negative measurable function defined in $\Omega$,
	then the inequality is valid for $t\in(0,1)$:
	\begin{equation}\label{ineqrearrangonun1}
	\bar W^\star(t\cdot\mathrm{mes}_d(\Omega),\,\Omega)\ge\min_{1\le n\le N}\bar W^\star(t\cdot\mathrm{mes}_d(\Omega_n),\,\Omega_n). 
	\end{equation}
\end{lemma}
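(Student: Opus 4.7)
\textbf{Proof plan for Lemma \ref{lmsumrear1}.} The plan is to unpack the definition of the non-increasing rearrangement via superlevel sets, exploit the fact that $\mathrm{mes}_d$ is additive on essentially disjoint pieces, and then re-close with the supremum. Set $M:=\min_{1\le n\le N}\bar W^\star(t\cdot\mathrm{mes}_d(\Omega_n),\,\Omega_n)$; it suffices to establish that $\bar W^\star(t\cdot\mathrm{mes}_d(\Omega),\,\Omega)\ge M$, since this is precisely inequality \eqref{ineqrearrangonun1}.

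First I would fix an arbitrary $s$ with $0<s<M$ and, for each $n\in\{1,\dots,N\}$, use the definition \eqref{dfSVyrdel} of $\bar W^\star$ to find $s_n>s$ with $\lambda^\star(s_n,W,\Omega_n)\ge t\cdot\mathrm{mes}_d(\Omega_n)$. The superlevel-set function $\lambda^\star(\cdot,W,\Omega_n)$ from \eqref{dfLVsry} is non-increasing in its first argument, so from $s<s_n$ it follows that
\begin{equation*}
\lambda^\star(s,W,\Omega_n)=\mathrm{mes}_d\bigl(\{\e\in\Omega_n:\,W(\e)\ge s\}\bigr)\ge t\cdot\mathrm{mes}_d(\Omega_n).
\end{equation*}

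Next I would combine these $N$ inequalities using the essentially disjoint decomposition $\Omega=\bigcup_{n=1}^N\Omega_n$. Since $\mathrm{mes}_d(\Omega_k\cap\Omega_l)=0$ for $k\neq l$, the sets $\{\e\in\Omega_n:\,W(\e)\ge s\}$ are pairwise essentially disjoint as well, so additivity of Lebesgue measure yields
\begin{equation*}
\lambda^\star(s,W,\Omega)=\sum_{n=1}^N\lambda^\star(s,W,\Omega_n)\ge t\cdot\sum_{n=1}^N\mathrm{mes}_d(\Omega_n)=t\cdot\mathrm{mes}_d(\Omega).
\end{equation*}
Hence every $s\in(0,M)$ belongs to the set whose supremum defines $\bar W^\star(t\cdot\mathrm{mes}_d(\Omega),\Omega)$, and passing to that supremum over $s\uparrow M$ gives the desired bound $\bar W^\star(t\cdot\mathrm{mes}_d(\Omega),\Omega)\ge M$.

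The argument is essentially formal; there is no serious obstacle beyond being careful with the defining inequality of $\bar W^\star$ (weak versus strict inequality at the supremum), which is why I approach $M$ from below through $s<M$ rather than taking $s=M$ directly. The role of the hypothesis $\mathrm{mes}_d(\Omega_k\cap\Omega_l)=0$ for $k\neq l$ is only to make the measures of the pieces add up; no structure of $W$ beyond measurability and nonnegativity is needed, and the statement holds with the same proof for any sigma-finite measure in place of $\mathrm{mes}_d$.
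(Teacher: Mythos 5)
Your proof is correct and takes essentially the same route as the paper: both unpack the supremum defining $\bar W^\star$, use the monotonicity of $\lambda^\star$ to get superlevel-set measure bounds at a common threshold, sum them by additivity over the essentially disjoint pieces, and re-close with the supremum. The only cosmetic difference is that the paper works with an $\epsilon$-approximation $s_n>\bar W^\star(\cdot,\Omega_n)-\epsilon$ and takes $s_0=\min_n s_n$, whereas you directly show every $s<M$ lies in the set whose supremum defines $\bar W^\star(t\cdot\mathrm{mes}_d(\Omega),\Omega)$; these are equivalent formulations of the same argument.
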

\begin{proof}
	By definition \eqref{dfSVyrdel}-\eqref{dfLVsry} of the non-increasing rearrangement $\bar W^\star$, for any $n\in\{1,2,\dots,N\}$ and $\epsilon>0$ there is $s_n>0$ such that 
	\begin{equation*}
	s_n>\bar W^\star(t\cdot\mathrm{mes}_d(\Omega_n),\,\Omega_n)-\epsilon
	\end{equation*}
	and $\lambda^\star(s_n,W,\Omega_n)\ge t\cdot\mathrm{mes}_d(\Omega_n)$.
	Let us take $s_0=\min_{1\le n\le N}s_n$. Then taking into account that the functions $\lambda^\star(s,W,\Omega_n)$ are non-increasing, we get:
	\begin{eqnarray*}
		&&	\lambda^\star(s_0,W,\Omega)=\sum_{n=1}^N\lambda^\star(s_0,W,\Omega_n)\ge \sum_{n=1}^N\lambda^\star(s_n,W,\Omega_n)\ge\\
		&&t\sum_{n=1}^N\mathrm{mes}_d(\Omega_n)=
		 t\cdot\mathrm{mes}_d(\Omega).
	\end{eqnarray*}
	Hence
	\begin{equation*}
	\bar W^\star(t\cdot\mathrm{mes}_d(\Omega),\,\Omega)\ge s_0>\min_{1\le n\le N}\bar W^\star(t\cdot\mathrm{mes}_d(\Omega_n),\,\Omega_n)-\epsilon.
	\end{equation*}
	Since $\epsilon>0$ is arbitrary, we get the desired inequality \eqref{ineqrearrangonun1}.
\end{proof}
\begin{lemma}\label{lminclrear2}
	Let $\Omega_1$ and $\Omega_2$ be measurable subsets of $\R^d$ such that $\Omega_1\subseteq\Omega_2$ and $W(\e)$ be a non-negative measurable function 
	defined on $\Omega_2$. Then for any $t>0$ the inequality
$\bar W^\star(t,\Omega_1)\le\bar W^\star(t,\Omega_2)$
is valid.
\end{lemma}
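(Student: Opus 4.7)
The plan is to unpack the definition of $\bar W^\star$ and exploit monotonicity of Lebesgue measure under set inclusion. By the brief notation convention in Remark \ref{rembriefnot} and definitions \eqref{dfSVyrdel}--\eqref{dfLVsry}, we have
\begin{equation*}
\bar W^\star(t,\Omega_i)=\sup\{s>0:\,\mathrm{mes}_d(\{\e\in\Omega_i:\,W(\e)\ge s\})\ge t\}\quad(i=1,2).
\end{equation*}
Since $\Omega_1\subseteq\Omega_2$, for every $s>0$ the level set inclusion
$\{\e\in\Omega_1:\,W(\e)\ge s\}\subseteq\{\e\in\Omega_2:\,W(\e)\ge s\}$
holds, hence $\lambda^\star(s,W,\Omega_1)\le\lambda^\star(s,W,\Omega_2)$.

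Consequently, any $s>0$ satisfying $\lambda^\star(s,W,\Omega_1)\ge t$ also satisfies $\lambda^\star(s,W,\Omega_2)\ge t$, so the set of admissible $s$ in the supremum defining $\bar W^\star(t,\Omega_1)$ is contained in the corresponding set for $\bar W^\star(t,\Omega_2)$. Taking the supremum yields the desired inequality. There is no real obstacle here; the lemma is essentially a direct consequence of the monotonicity of Lebesgue measure under set inclusion combined with the definition of the non-increasing rearrangement.
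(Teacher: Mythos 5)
Your proof is correct and follows essentially the same route as the paper's: both observe $\lambda^\star(s,W,\Omega_1)\le\lambda^\star(s,W,\Omega_2)$ from the level-set inclusion, deduce that the admissible set of $s$ for $\Omega_1$ is contained in that for $\Omega_2$, and take suprema. No meaningful difference.
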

\begin{proof}
	In view of the inclusion $\Omega_1\subseteq\Omega_2$ and definition \eqref{dfLVsry}, $\lambda^\star(s,W,\Omega_1)\le\lambda^\star(s,W,\Omega_2)$.
	Hence 
	\begin{equation*}
	\{s>0:\,\lambda^\star(s,W,\Omega_1)\ge t\}\subseteq\{s>0:\,\lambda^\star(s,W,\Omega_2)\ge t\}.
	\end{equation*}
	This inclusion and definition  \eqref{dfSVyrdel} imply the desired claim.    
\end{proof}

\section{Some examples} \label{sec:examples}
\setcounter{equation}{0}
First of all, consider some examples of the $(\log_m,\,\theta)$-dense system (Definition \ref{dfdenslogmtetpart1}).

\begin{example}\label{ex1}
	Consider the classical middle third Cantor set $\mathcal{C}\subset[0,1]$, Let $I_{n,k}\;(n=1,2,\dots),\,k=1,2,\dots, 2^{n-1}$ be the closures 
	of intervals adjacent to $\mathcal{C}$. It is known that they are disjoint and for any fixed $n$ and each $k\in\{1,2,\dots, 2^{n-1}\}$ 
	$\mathrm{mes}_1(I_{n,k})=3^{-n}$. For fixed $n$ we shall number the intervals $I_{n,k}$ from the left to the right. Denote $D_n=\bigcup_{k=1}^{2^{n-1}}I_{n,k}$. Let us show that the sequence $\{D_n\}_{n=1}^\infty$ forms 
	in $[0,1]$ a $(\log_3,\,1/9)$-dense system. Let us take $\theta=1/9$, $r\in\big(0,\,\min\{1,\,1/(3^2\theta)\}\big)=(0,\,1)$, an interval $Q_r(y)=[y,\,y+r]$ and the natural number 
	$n=\Big[\log_3\Big(\frac{1}{r\theta}\Big)\Big]$. Then $n\ge 2$ and
	\begin{equation}\label{limitsforrtht}
	3^{-(n+1)}<r\theta\le 3^{-n}.
	\end{equation}
	Consider two cases:
	
	a) there are $j\in\{1,2,\dots,n-1\}$, $k\in\{1,2,\dots,2^{j-1}\}$ such that
	\begin{equation*}
	\mathrm{mes}_1\big(Q_r(y)\cap I_{j,k}\big)>3^{-n};
	\end{equation*}
	 
		b) for all $j\in\{1,2,\dots,n-1\}$, $k\in\{1,2,\dots,2^{j-1}\}$ $\mathrm{mes}_1\big(Q_r(y)\cap I_{j,k}\big)\le 3^{-n}$.
	
	In the case a)  in view of the right inequality \eqref{limitsforrtht}, $\mathrm{mes}_1\big(Q_r(y)\cap I_{j,k}\big)>r\theta$, hence there is a real $z$ 
	such that $Q_{r\theta}(z)\subset Q_r(y)\cap I_{j,k}$.
	
	Consider the case b). Let $(c_{n,k},\,d_{n,k})\;(k=1,2,\dots,2^{n-1})$ be the intervals forming the set $(0,\,1)\setminus\bigcup_{j=1}^{n-1}D_j$. 
	By the construction of Cantor set $\mathcal{C}$, 
$d_{n,k}-c_{n,k}=3^{-(n-1)}$
	and each interval $(c_{n,k},\,d_{n,k})$ contains an unique interval $I_{n,k}=[c_{n,k}+3^{-n},\,d_{n,k}-3^{-n}]\subset D_n$. 
	Assumption b) implies that for some $k\in\{1,2,\dots,2^{n-1}\}$ or the left edge $y$ of the interval $Q_r(y=[y,\,y+r]$ belongs 
	to $[c_{n,k}-3^{-n},\,c_{n,k}]$, or the right its edge $y+r$ belongs to $[d_{n,k},\,d_{n,k}+3^{-n}]$. 
	On the other hand, the left inequality of \eqref{limitsforrtht} (with $\theta=1/9$) implies that $r>3^{-(n-1)}$. All the above arguments 
	imply that $I_{n,k}\subseteq Q_r(y)$, hence in view of the right inequality \eqref{limitsforrtht}, 
	$\mathrm{mes}_1\big(Q_r(y)\cap I_{n,k}\big)=3^{-n}\ge r\theta$. Therefore in the case b) there is a real $z$ and $k\in\{1,2,\dots, 2^{n-1}\}$
	such that $Q_{r\theta}(z)\subset Q_r(y)\cap I_{n,k}$. Thus, the sequence $\{D_n\}_{n=1}^\infty$ satisfies the conditions of 
	Definition \ref{dfdenslogmtetpart1} with $d=1$, $m=3$, $\theta=1/9$ and $\Pi=I_{j,k}$.
\end{example}

\begin{example}\label{ex2}
	Consider a cube $Q_1(\by)\subset\R^d$, represented in the form $Q_1(\by)=Q_1(\by_1)\times Q_1(\by_2)$, where 
	$Q_1(\by_1)\subset\R^{d_1}$ and $Q_1(\by_2)\subset\R^{d_2}$. Let $\{D_n\}_{n=1}^\infty$ be a sequence of subsets of 
	the cube $Q_1(\by_1)$, forming in it a $(\log_m,\theta)$-dense system. It is easy to see that the sequence
	$\{D_n\times Q_1(\by_2)\}_{n=1}^\infty$ forms in $Q_1(\by)$ a $(\log_m,\theta)$-dense system too.	
\end{example}

\begin{example}\label{ex3}
	Let $\{D_n^{(i)}\}_{n=1}^\infty\;(i=1,2,\dots,I)$ be a finite collection of sequences of sets such that each of them forms a 
	$(\log_m,\theta)$-dense system $(m>1,\,\theta\in(0,\,1))$ in a cube $Q_1(\by_i)\subset\R^{d_i}$.  Consider the following 
	sequence of subsets of the cube $Q_1(\by)=\times_{i=1}^IQ_1(\by_i)\subset\R^d\;\big(\by=(\by_1,\by_2,\dots,\by_I),\,d=\sum_{i=1}^Id_i\big)$:
	\begin{equation}\label{dfmathcalD}
	\mathcal{D}_N=\bigcup_{\vec n\in\mathcal{E}_N}\times_{i=1}^ID_{n_i}^{(i)},
	\end{equation}
	where 
	\begin{equation}\label{dfcalEN}
	\mathcal{E}_N=\{\vec n=(n_1,n_2,\dots,n_I)\in\N^I:\,\max_{1\le i\le I}n_i=N\}
	\end{equation}
	Let us show that the sequence $\{\mathcal{D}_N\}_{N=1}^\infty$ forms a 
	$(\log_m,\theta)$-dense system in the cube $Q_1(\by)$. It is clear that for this sequence the condition (a) of Definition \ref{dfdenslogmtetpart1} is satisfied.
	Let us show that also condition (b) of this definition is satisfied for it. Let us take a cube $Q_r(\bz)=\times_{i+1}^IQ_r(\bz_i)\subset Q_1(\by)$ with $r\in\big(0,\,\min\{1,\,\frac{1}{\theta m^2}\}\big)$.
	By the condition (b) of the above mentioned  definition applied to  $i$-th sequence of sets, 
	  there is  
$j_i\in\{1,2,\dots, n\}$ with $n=\big[\log_m\big(\frac{1}{\theta r}\big)\big]$
such that for some regular parallelepiped $\Pi_i\subseteq D_{j_i}^{(i)}$ 
	 there is a cube $Q_{\theta r}(\bs_i)$, contained in $\Pi_i\cap Q_r(\bz_i)$. Denote 
	$Q_{\theta r}(\bs)=\times_{i=1} ^IQ_{\theta r}(\bs_i)\;(\bs=(\bs_1,\bs_2,\dots,\bs_I)$, $\Pi=\times_{i=1}^I \Pi_i$.
	Let us notice that $\Pi\subseteq\mathcal{D}_J$ with 
	\begin{equation*}
	J=\max_{1\le i\le I}\,j_i\le n.
	\end{equation*}
	It is easy to see that $Q_{\theta r}(\bs)\subseteq Q_r(\bz)\cap\Pi$. This means that the sequence $\{\mathcal{D}_N\}_{N=1}^\infty$ satisfies 
	condition (b) of Definition \ref{dfdenslogmtetpart1}.
\end{example}

Now we shall construct some counterexamples connected with conditions of discreteness of the spectrum
of the operator $H=-\Delta+V(\e)\cdot$, obtained above.

\begin{example}\label{ex4}
	Here we shall  construct an example of the potential $V(\e)\ge 0$ which satisfies conditions 
	of Theorem \ref{thlogmdenspolyhed} (hence the spectrum of the operator $H=-\Delta+V(\e)\cdot$ is discrete) , but the condition \eqref{cndGMD} of the criterion from \cite{4},
	formulated in Remark \ref{remGMD}, is not satisfied for it. 
	Let us return to the sequence $\{D_n\}_{n=1}^\infty$ of subsets of the interval $[0,\,1]$ and considered in Example \ref{ex1}, and the following 
	sequence of subsets of the cube $Q_1(0)$:
	\begin{equation}\label{dfcalD}
	\mathcal{D}_n=D_n\times[0,\,1]^{d-1}.
	\end{equation} 
	Consider also the translations of the cube $Q_1(0)$ and the sets
	$\mathcal{D}_n$ by the vectors $\vec l=(l_1,l_2,\dots,l_d)\in\Z^d$: $Q_1(\vec l)=Q_1(0)+\{\vec l\}$, 
	\begin{equation}\label{dfcalDvecl}
	\mathcal{D}_n(\vec l)=\mathcal{D}_n+\{\vec l\}.
	\end{equation}
	The arguments of Examples \ref{ex1} and \ref{ex2} imply that for any fixed $\vec l\in\Z^d$  the sequence $\{\mathcal{D}_n(\vec l)\}_{n=1}^\infty$ 
	forms in $Q_1(\vec l)$ a $(\log_3,\,1/9)$-dense system.  For $\beta\in(0,1)$ consider on $\R$ the
	$1$-periodic function $\theta_\beta(x)$, defined on the interval
	$(0,1]$ in the following manner:
	\begin{equation}\label{dfthatbetnew}
	\theta_\beta(x)=\left\{\begin{array}{ll}
	1&\quad\mathrm{for}\quad x\in(0,\beta],\\
	0&\quad\mathrm{for}\quad x\in(\beta,1].
	\end{array}\right.
	\end{equation}
	Let us take
	\begin{equation}\label{alphain}
	\alpha\in\Big(0,\,2\Big).
	\end{equation} 
	Consider the following function, defined on $(0,1]$:
	\begin{eqnarray}\label{dfSigmaaNalphnew}
	\hskip10mm\Sigma_{N,\,p,\alpha}(x):=\left\{\begin{array}{ll}
	0&\quad\mathrm{for}\quad x\in(0,\,1]\setminus\bigcup_{n=1}^\infty D_n,\\
N\theta_\beta(3^px)\vert_{\beta=3^{-\alpha n}}&\quad\mathrm{for}\quad x\in
	D_n\;(n=1,2,\dots)
	\end{array}\right.
	\end{eqnarray}
	$(N>0,\,p\in\N)$. Recall that we denote by  $P_1$ the operator, defined  by \eqref{dfP1}.
	Consider a function
	$\mathcal{N}:\Z^d\rightarrow\R_+$, satisfying the condition
\begin{equation}\label{cndNl1new}
\mathcal{N}(\vec l)\ge 1,\quad\mathcal{N}(\vec
l)\rightarrow\infty\quad\mathrm{for}\quad |\vec
l|_\infty\rightarrow\infty,
\end{equation}
	where $|\vec l|_\infty=\max_{1\le i\le d}|l_i|$.
	Let us construct the
	desired potential in the following manner:
	\begin{eqnarray}\label{dfpotentValnew}
	&&V_\alpha(\e):=\Sigma_{N,\,p,\,\alpha}(P_1(\e-\vec
	l))\vert_{N=\mathcal{N}(\vec l),\,p=|\vec l|_\infty+1}\quad
	\mathrm{for}\quad\vec l\in \Z^d\\
	&&\mathrm{and}\quad \e\in
	Q_1(\vec l).\nonumber
	\end{eqnarray}
	It is clear that $V_\alpha\in L_{\infty,\,loc}(\R^d)$.  
	Let us prove that the potential $V_\alpha(\e)$ satisfies all the conditions of Theorem \ref{thlogmdenspolyhed}. Let us take 
	a natural $n$, 
	\begin{equation}\label{jin1n}
	j\in\{1,2,\dots,n\}
	\end{equation}
	and a cube  
	\begin{equation}\label{inclcube}
	Q(\vec\xi,\,n)\subseteq\mathcal{D}_j(\vec l)
	\end{equation} 
	of the $3$-adic partition of $Q_1(\vec l)$. Let us notice that 
	\begin{equation*}
	P_1\big(Q(\vec\xi,\,n)\big)=[k\,3^{-n},\,(k+1)\,3^{-n}]
	\end{equation*}
	for some $k\in\Z$.
	Then taking into account definitions \eqref{dfcalD}-\eqref{dfSigmaaNalphnew}, \eqref{dfpotentValnew} and the $1$-periodicity of $\theta_\beta(t)$, we get for $|\vec l|_\infty>n$:
	\begin{eqnarray}\label{bigest}
	&&\mathrm{mes}_d\Big(\big\{\e\in Q(\vec\xi,n):\,V_\alpha(\e)>0\big\}\Big)=\\
	&&3^{-(d-1)n}\mathrm{mes}_1\Big(\big\{x\in[k\,3^{-n},\,(k+1)\,3^{-n}]:\,\nonumber\\
	&&\theta_\beta(3^p\,x)\vert_{\beta=3^{-\alpha j},\,p=|\vec l|_\infty+1}>0 \big\}\Big)=\nonumber\\
	&&3^{-(d-1)n}\mathrm{mes}_1\Big(\big\{x\in[0,\,3^{-n}]:\,\theta_\beta(3^p\,x)\vert_{\beta=3^{-\alpha j},\,p=|\vec l|_\infty+1}>0 \big\}\Big)=\nonumber\\
	&&\frac{3^{-(d-1)n}}{3^{|\vec l|_\infty+1}}\mathrm{mes}_1\Big(\big\{t\in[0,\,3^{|\vec l|_\infty+1-n}]:\,\theta_\beta(t)\vert_{\beta=3^{-\alpha j}}>0\big\}\Big)=\nonumber\\
	&& 3^{-dn}\mathrm{mes}_1\Big(\big\{t\in[0,\,1]:\,\theta_\beta(t)\vert_{\beta=3^{-\alpha j}}>0\big\}\Big)=\nonumber\\
	&&3^{-\alpha j}\mathrm{mes}_d\big(Q(\vec\xi,\,n)\big)\ge 3^{-\alpha n}\mathrm{mes}_d\big(Q(\vec\xi,\,n)\big).\nonumber
	\end{eqnarray}
	Therefore in view of definitions \eqref{dfSVyrdel}-\eqref{dfLVsry}, \eqref{dfSigmaaNalphnew} and \eqref{dfpotentValnew},
	\begin{equation*}
	\bar V_\alpha^\star\big(3^{-\alpha n},\,\vec\xi,\,n\big)\ge\mathcal{N}(\vec l),
	\end{equation*}
	if conditions \eqref{jin1n} and \eqref{inclcube} are satisfied.  This estimate and conditions \eqref{alphain}, \eqref{cndNl1new}-b imply that condition 
	\eqref{cndlogdenspolyhed} of Theorem \ref{thlogmdenspolyhed} is satisfied for the potential $V_\alpha(\e)$ with $\gamma(r)=r^\alpha$ satisfying condition \eqref{cndgammar}. Hence the spectrum of the operator 
	$H=-\Delta+V_\alpha(\e)\cdot$ is discrete.  
	
	Let us show that the condition \eqref{cndGMD} of the criterion from \cite{4} is not satisfied for the potential $V_\alpha(\e)$. To this end 
	for any natural $n$ consider $\vec l_n\in\Z^d$ such that $|\vec l_n|_\infty=n$ and take a cube $Q(\vec\xi_n,\,n)\subset\mathcal{D}_n(\vec l_n)$. In view 
	of \eqref{dfthatbetnew}, \eqref{dfSigmaaNalphnew}, \eqref{dfpotentValnew} and \eqref{cndNl1new}-a, $\int_{Q(\vec\xi_n,\,n)}V_\alpha(\e)\,\mathrm{d}\e>0$.
	Then taking $\delta>0$, we have:
	\begin{eqnarray*}
		&&\big\{\e\in Q(\vec\xi_n,\,n):\,V_\alpha(\e)\ge\frac{\delta}{\mathrm{mes}_d(Q(\vec\xi_n,\,n))}\int_{Q(\vec\xi_n,\,n)}V_\alpha(\e)\,\mathrm{d}\e\big\}\subseteq\\
		&&\big\{\e\in Q(\vec\xi_n,\,n):\,V_\alpha(\e)>0\big\}.
	\end{eqnarray*}
	On the other hand, if we take in \eqref{bigest} $j=n$ and $\vec\xi=\vec\xi_n$, we get
	\begin{equation*}
	\mathrm{mes}_d\Big(\big\{\e\in Q(\vec\xi_n,\,n):\,V_\alpha(\e)>0\big\}\Big)=3^{-\alpha n}\mathrm{mes}_d\big(Q(\vec\xi_n,\,n)\big).
	\end{equation*}
	These circumstances mean that the sequence 
	\begin{equation*}
	\frac{\mathrm{mes}_d\Big(\big\{\e\in Q(\vec\xi_n,\,n):\,V_\alpha(\e)\ge\frac{\delta}{\mathrm{mes}_d(Q(\vec\xi_n,\,n))}\int_{Q(\vec\xi_n,\,n)}V_\alpha(\e)\,\mathrm{d}\e\big\}\Big)}
	{\mathrm{mes}_d\big(Q(\vec\xi_n,\,n)\big)}
	\end{equation*}
	tends to zero as $n\rightarrow\infty$. This means that condition \eqref{cndGMD} is not satisfied for the potential $V_\alpha(\e)$. 
\end{example}

\begin{example}\label{excorpolyhed}
	Consider the potential $V_\alpha (\e)$ constructed in the previous Example \ref{ex4}, but now
	\begin{equation}\label{varyalph}
	2(d-2)/d<\alpha<2.
	\end{equation} 
	As we have shown there, the conditions of Theorem \ref{thlogmdenspolyhed} are satisfied for this potential (hence the
	spectrum of the operator $H=-\Delta+V_\alpha(\e)\cdot$ is discrete). My goal is to show that 
	condition \eqref{cndSVy} of Theorem \ref{lmmeasinstcap1} is not satisfied for $V_\alpha(\e)$.
	Denote $\psi(r)=r^\alpha$. Then in view of the left inequality of \eqref{varyalph}, $\lim_{r\downarrow 0}r^{-2(d-2)/d}\psi(r)<\infty$.
	Let us take a function $\hat\gamma(r)$ satisfying condition \eqref{cndtildgam} for some $r_0>0$. Then 
	$\limsup_{r\downarrow 0}\frac{\hat\gamma(r)}{\psi(r)}=\infty$.
Hence for some positive decreasing sequence $\{r_j\}_{j=1}^\infty$ tending to zero
	\begin{equation}\label{decrseqrj1}
	\lim_{j\rightarrow \infty}\frac{\hat\gamma(r_j)}{\psi(r_j)}=\infty.
	\end{equation}
	In order to prove that condition \eqref{cndSVy} is not satisfied for the potential $V_\alpha(\e)$, it is sufficient to find  a sequence 
	of points $\by_j\in\R^d$ such that
	\begin{equation}\label{findseq1}
	\lim_{j\rightarrow\infty}|\by_j|=\infty\quad\mathrm{and}\quad \limsup_{j\rightarrow\infty}\bar V_\alpha^\star\big(\hat\gamma(r_j)\mathrm{mes}_d(Q_{r_j}(\by_j)),\,Q_{r_j}(\by_j)\big)<\infty.
	\end{equation}
	Let us choose an increasing sequence of natural numbers $\{n_j\}_{j=1}^\infty$ such that 
	\begin{equation}\label{ineqforrj1}
	3^{-(n_j+1)}\le r_j<3^{-n_j}.
	\end{equation}
	Consider the vectors $\vec l\in\Z^d$ of the form $\vec l=(l,0,\dots,0)$, where $l\in\Z$ will be chosen below. Consider the intervals $I_{n_j,1}=[a_{n_j,1},\,b_{n_j,1}]\subset D_{n_j}$ and
	the cubes $Q_{j,l}=Q_{3^{-n_j}}(\tilde\by_{j,l})$, where $\tilde\by_{j,l}=\big(a_{n_j,1}+l,\,0,\dots,0\big)$. Then $P_1(Q_{j,l})=I_{n_j,1}+l$. In view of the right inequality
	of \eqref{ineqforrj1}, $Q_{r_j}(\tilde\by_{j,l})\subset Q_{j,l}$. Then using definitions \eqref{dfpotentValnew} and \eqref{dfthatbetnew}, \eqref{dfSigmaaNalphnew}, 
	Lemma \ref{lmpropsigmaNbetnew1}, the left inequality of \eqref{ineqforrj1} and taking $l=n_j$, we have:
	\begin{eqnarray}\label{estmespositVpsi1}
	&&\mathrm{mes}_d\big(\{\e\in Q_{r_j}(\tilde\by_{j,l}):\,V_\alpha(\e)>0\}\big)\le\\
	&&\mathrm{mes}_d\big(\{\e\in Q_{j,l}:\,V_\alpha(\e)>0\}\big)=\nonumber\\
	&&3^{-n_j(d-1)}\mathrm{mes}_1\big(\{x\in I_{n_j,1}:\,\theta_\beta(3^{l+1}x)\vert_{\beta=\psi(3^{-(n_j+1)})}\}\big)\le\nonumber\\
	&&\psi(3^{-(n_j+1)})(3^{-n_jd}+3\cdot 3^{-n_j(d-1)}3^{-l-1})=\nonumber\\
	&&2\psi(3^{-(n_j+1)})3^{-(n_j+1)d}3^d\le 2\cdot 3^d\psi(r_j)\mathrm{mes}_d(Q_{r_j}(\tilde\by_{j,l})).\nonumber
	\end{eqnarray} 
	On the other hand, in view of \eqref{decrseqrj1}, 
	\begin{equation*}
	\exists\, J>0\quad\forall\, j\ge J:\quad 2\cdot 3^d\psi(r_j)<\hat\gamma(r_j).
	\end{equation*}
	This circumstance, estimate \eqref{estmespositVpsi1} and definitions 
 \eqref{dfpotentValnew} and \eqref{dfthatbetnew}, \eqref{dfSigmaaNalphnew} imply that 
	\begin{equation*}
	\forall\,j\ge J:\quad \bar V_\alpha^\star\big(\hat\gamma(r_j)\mathrm{mes}_d(Q_{r_j}(\by_j)),\,Q_{r_j}(\by_j)\big)=0, 
	\end{equation*}
	where $\by_j=\tilde\by_{j,n_j}$. This means that relation \eqref{findseq1} is valid, i.e., the potential $V_\alpha(\e)$ does not satisfy condition \eqref{cndSVy}.
\end{example}

In the previous consideration we have used the following claim:

\begin{lemma}\label{lmpropsigmaNbetnew1}
	For any interval $[a,b]$ and $S>0$ consider the quantity
	\begin{equation*}
	M(S,\beta,a,b)=\mathrm{mes}_1\{x\in[a,b]:\,\theta_\beta(Sx)>0\},
	\end{equation*}
	where $\theta_\beta(x)$ is the $1$-periodic function, defined by
	\eqref{dfthatbetnew}. It satisfies the inequalities
	\begin{equation*}
		M(S,\beta,a,b)\le \beta(b-a)+2\beta/S,
	\end{equation*}
	\begin{equation*}
M(S,\beta,a,b)\ge \beta(b-a)-2\beta/S;
	\end{equation*}
\end{lemma}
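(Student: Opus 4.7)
The strategy is a direct reduction to a one-variable integral computation, exploiting the $1$-periodicity of $\theta_\beta$. First, I would rewrite the quantity of interest as
\begin{equation*}
M(S,\beta,a,b)=\int_a^b \theta_\beta(Sx)\,\mathrm{d}x=\frac{1}{S}\int_{Sa}^{Sb}\theta_\beta(y)\,\mathrm{d}y,
\end{equation*}
via the change of variable $y=Sx$; this is allowed because $\theta_\beta$ is a bounded measurable function and the set $\{y:\theta_\beta(y)>0\}$ has, per period, measure exactly $\beta$. Set $L:=Sb-Sa=S(b-a)$, $N:=\lfloor L\rfloor$, so that $N\le L<N+1$.

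Next I would quantify how close $\int_{Sa}^{Sb}\theta_\beta$ is to $L\beta$. By the $1$-periodicity of $\theta_\beta$, every closed interval of length one contributes exactly $\beta$ to its integral. Splitting $[Sa,Sb]$ into $N$ consecutive unit subintervals followed by a remainder of length $r=L-N\in[0,1)$, I would show
\begin{equation*}
N\beta\;\le\;\int_{Sa}^{Sb}\theta_\beta(y)\,\mathrm{d}y\;\le\;N\beta+\min(r,\beta)\;\le\;N\beta+\beta,
\end{equation*}
since the integral over the tail of length $r$ is at most the measure of the part of a single period where $\theta_\beta=1$, namely $\min(r,\beta)\le\beta$, and is at least $0$. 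Combined with $L-1\le N\le L$, this yields
\begin{equation*}
\beta L-\beta\;\le\;\int_{Sa}^{Sb}\theta_\beta\;\le\;\beta L+\beta.
\end{equation*}

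Dividing by $S$ gives $\beta(b-a)-\beta/S\le M(S,\beta,a,b)\le\beta(b-a)+\beta/S$, which is in fact stronger than the two inequalities claimed in the lemma (the factor $2$ on $\beta/S$ is not needed). There is no real obstacle here: the only thing to be careful about is the edge-contribution bound $\min(r,\beta)\le\beta$, where we use $\beta\in(0,1)$ from the definition of $\theta_\beta$. In particular, if one prefers to avoid introducing $N$ and $r$, one can alternatively write $[Sa,Sb]\cap\{\theta_\beta>0\}$ as a disjoint union of at most $N+1\le L+1$ arcs of length at most $\beta$ and at least $\lfloor L\rfloor-1\ge L-2$ arcs of length exactly $\beta$ (those fully contained in $[Sa,Sb]$), which directly gives the stated looser bounds with $2\beta/S$.
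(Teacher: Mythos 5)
Your proof is correct and follows essentially the same periodicity argument as the paper: both count how many full periods of $\theta_\beta$ fit after the rescaling $y=Sx$ and then bound the boundary contribution. The only difference is in the bookkeeping — the paper widens and narrows the interval to the integer grid points $[Sa]$, $[Sb]$ and loses up to one period at each endpoint (hence the constant $2\beta/S$), whereas you decompose $[Sa,Sb]$ into $\lfloor S(b-a)\rfloor$ full unit intervals plus a single fractional tail, so you only lose one period and obtain the slightly sharper error term $\beta/S$.
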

\begin{proof}
	Taking into account the $1$-periodicity of $\theta_\beta(x)$, we obtain:
\begin{eqnarray*}
		&&M(S,\beta,a,b)\le M(S,\beta,[S a]/S,([S b]+1)/S)=\\
		&&S^{-1}M(1,\beta,[S a],[S b]+1)=\frac{\beta}{S}\big([S]+1-[S a]\big)\le\\
		&&\beta(b-a)+2\beta/S,
	\end{eqnarray*}
	\begin{eqnarray*}
		&&M(S,\beta,a,b)\ge M(S,\beta,([S a]+1)/S,[S b]/S)=\\
		&&S^{-1}M(1,\beta,[S a]+1,[S b])
		=\frac{\beta}{S}\big([S b]-1-[S a]\big)\ge\\
		&&\beta(b-a)-2\beta/S.
	\end{eqnarray*}
\end{proof}

\appendix
\section{Base polyhedron of harmonic capacity}
\label{sec:appendix}

\setcounter{equation}{0}

The following claim on the base polyhedron of harmonic capacity, defined by \eqref{defcore}, is valid:

\begin{proposition}\label{prcore}
	Suppose that a domain 
	\begin{equation*}
	\Omega\subset\R^d
	\end{equation*}
	 is open and bounded. Then the base polyhedron $\mathrm{BP}(\bar\Omega)$ 
	of the harmonic capacity on $\bar\Omega$  is nonempty, convex and weak*-compact.
	\end{proposition}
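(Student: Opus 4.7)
The plan is to handle convexity, weak*-compactness, and nonemptiness separately, with the main difficulty concentrated in the last property.

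\textbf{Convexity} will be a one-line verification: if $\mu_1, \mu_2 \in \mathrm{BP}(\bar\Omega)$ and $t \in [0,1]$, both constraints in \eqref{defcore} are preserved under $t\mu_1 + (1-t)\mu_2$, since $(t\mu_1 + (1-t)\mu_2)(A) \le t\,\mathrm{cap}(A) + (1-t)\,\mathrm{cap}(A) = \mathrm{cap}(A)$ and equality on $\bar\Omega$ is preserved.

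\textbf{Weak*-compactness} will follow from Banach-Alaoglu together with weak*-closedness. Every $\mu \in \mathrm{BP}(\bar\Omega)$ has total mass $\mu(\bar\Omega) = \mathrm{cap}(\bar\Omega) < \infty$ (finite because $\bar\Omega$ is compact), so $\mathrm{BP}(\bar\Omega)$ is uniformly norm-bounded in $C(\bar\Omega)^\ast = M(\bar\Omega)$ and hence weak*-precompact. For closedness, the normalization $\mu(\bar\Omega) = \mathrm{cap}(\bar\Omega)$ is weak*-continuous (pairing with $\mathbf{1}\in C(\bar\Omega)$). I would reduce the inequalities $\mu(A) \le \mathrm{cap}(A)$ via outer regularity to the open sets $U \subseteq \bar\Omega$; for those, the Urysohn approximation
\[
\mu(U) = \sup\{\langle \mu, \varphi\rangle : \varphi \in C(\bar\Omega),\ 0 \le \varphi \le 1,\ \mathrm{supp}\,\varphi \subseteq U\}
\]
exhibits $\mu \mapsto \mu(U)$ as a supremum of weak*-continuous linear functionals, hence weak*-lower-semicontinuous, so $\{\mu : \mu(U) \le \mathrm{cap}(U)\}$ is weak*-closed.

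\textbf{Nonemptiness} is the main obstacle and will be handled by a Choquet--Hahn-Banach construction driven by submodularity. The plan is to define
\[
L(f) := \int_0^\infty \mathrm{cap}\bigl(\{\e \in \bar\Omega : f(\e) > t\}\bigr)\, dt, \qquad f \in C(\bar\Omega),\ f \ge 0.
\]
By the classical Choquet--Schmeidler theorem, the submodularity \eqref{submod} of cap forces $L$ to be sublinear on the nonnegative cone (positively homogeneous and subadditive), while monotonicity of cap makes $L$ monotone, and $L(c\mathbf{1}) = c\,\mathrm{cap}(\bar\Omega)$. A sandwich form of Hahn-Banach then supplies a linear functional $\Lambda$ on $C(\bar\Omega)$ with $\Lambda(\varphi) \le L(\varphi)$ for all $\varphi \ge 0$ and $\Lambda(\mathbf{1}) = \mathrm{cap}(\bar\Omega)$; monotonicity of $L$ forces $\Lambda \ge 0$. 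Riesz representation then produces a Radon measure $\mu \ge 0$ with $\Lambda(\cdot) = \int \cdot\, d\mu$ and $\mu(\bar\Omega) = \mathrm{cap}(\bar\Omega)$. To verify $\mu(A) \le \mathrm{cap}(A)$ on Borel $A$, I would first treat open $U$ by approximating $\mathbf{1}_U$ from below by $\varphi \in C(\bar\Omega)$ with $\mathrm{supp}\,\varphi \subseteq U$: then $\{\varphi > t\} \subseteq U$ gives $L(\varphi) \le \mathrm{cap}(U)$, so $\int \varphi \, d\mu \le \mathrm{cap}(U)$ and taking the supremum over $\varphi$ yields $\mu(U) \le \mathrm{cap}(U)$; the extension to all Borel sets is then outer regularity of cap.

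The hard part will be the sublinearity of $L$: it is not purely formal and requires the submodular inequality \eqref{submod} combined with a layer-cake/Fubini argument, and is effectively the measure-theoretic translation of Shapley's nonemptiness-of-core theorem for convex coalition games. A secondary technical point is the outer regularity of harmonic capacity on Borel subsets of $\bar\Omega$, a standard property of cap as an outer Choquet capacity on $\R^d$.
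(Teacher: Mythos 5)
Your proof is correct in substance but takes a genuinely different route from the paper. The paper does not verify the three properties directly: it passes to the \emph{dual} set function $\mathrm{cap}^\star(A)=\mathrm{cap}(\bar\Omega)-\mathrm{cap}(\bar\Omega\setminus A)$, notes that this is non-negative, monotone, bounded, supermodular with $\mathrm{cap}^\star(\emptyset)=0$, invokes a known result (Marinacci--Montrucchio, cited as \cite{10}, Prop.\ 1) that the core of such a set function is nonempty, convex, and weak*-compact, and finishes with the identity $\mathrm{BP}(\bar\Omega)=\mathrm{Core}^\star(\bar\Omega)$ from Fujishige \cite{5}. Your argument instead proves nonemptiness from scratch: the Choquet functional $L(f)=\int_0^\infty\mathrm{cap}(\{f>t\})\,\mathrm{d}t$, its sublinearity from submodularity (Choquet--Schmeidler), a Hahn--Banach sandwich, and Riesz representation. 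This is precisely the standard proof of the result the paper cites, so it buys self-containedness at the cost of length; the paper's version buys brevity by outsourcing the hard Shapley-type nonemptiness step to the game-theory literature.

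Two technical points you should tighten. First, the identification $C(\bar\Omega)^\ast=M(\bar\Omega)$ you assert does not match the paper's definitions: the paper's $M(\bar\Omega)$ is the space of all finitely additive set functions on $\Sigma_B(\bar\Omega)$, which is strictly larger than the space of signed Radon measures that Riesz gives as $C(\bar\Omega)^\ast$, and the weak*-topology implicit in the Marinacci--Montrucchio reference is the $ba$-topology against bounded Borel functions, not against $C(\bar\Omega)$; your Banach--Alaoglu/Urysohn argument establishes compactness only in the coarser $C(\bar\Omega)$-weak* topology. This is repairable (the $ba$-weak* argument is the same Alaoglu trick applied to $B(\Sigma_B)^\ast$, and closedness is even easier since indicator functions are then already in the predual), but as written the topologies don't agree. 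Second, your Hahn--Banach sandwich needs $L$ to be a sublinear functional on all of $C(\bar\Omega)$, not only on the positive cone; the usual fix is the asymmetric Choquet extension $\tilde L(f)=\int_0^\infty\mathrm{cap}(\{f>t\})\,\mathrm{d}t+\int_{-\infty}^0\bigl(\mathrm{cap}(\{f>t\})-\mathrm{cap}(\bar\Omega)\bigr)\,\mathrm{d}t$, whose sublinearity again follows from submodularity and which satisfies $\tilde L(c\mathbf{1})=c\,\mathrm{cap}(\bar\Omega)$ for all $c\in\R$, so that the one-dimensional seed of the Hahn--Banach extension is consistently dominated.
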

\begin{proof}
	As it is known (\cite{11})(p. 537), the harmonic capacity  on $\Omega$ is a
	non-negative, monotone and bounded set functions and
	$\mathrm{cap}(\emptyset)=0$. Recall that it is submodular, i.e., property \eqref{submod} is valid. Let us consider the set
	function which is called {\it dual} to ``cap'':
	$\mathrm{cap}^\star(A)=\mathrm{cap}(\bar\Omega)-\mathrm{cap}(A^c)\,(A\in\Sigma_B(\bar\Omega))$,
	where $A^c=\bar\Omega\setminus A$. It is easy to show that ``cap*''
	is a non-negative, monotone and bounded set function and
	$\mathrm{cap^\star}(\emptyset)=0$, but it is {\it supermodular}
	in the sense that for any pair of sets
	$A,\,B\in\Sigma_B(\bar\Omega)$ $\mathrm{cap}^\star(A\cup
	B)+\mathrm{cap}^\star(A\cap
	B)\ge\mathrm{cap}^\star(A)+\mathrm{cap}^\star(B)$. As it is known (\cite{10}, Proposition 1),  
for the set functions of this kind
	the collection of measures
	\begin{eqnarray}\label{dfCorest}
	&&\mathrm{Core}^\star(\bar\Omega))=\{\mu\in M(\bar\Omega)):\;\\
	&&\mu(A)\ge \mathrm{cap}^\star(A)\;\mathrm{for\; all}\;
	A\in\Sigma_B(\bar\Omega))\;\mathrm{and}\;\mu(\bar\Omega)=\mathrm{cap}^\star(\bar\Omega)\}\nonumber
	\end{eqnarray}
	is nonempty, convex and and weak*-compact. It is called the {\it
		core} of the set function ``cap*''.  Since ``cap*'' is non-negative, it
	is clear that $\mathrm{Core}^\star(\bar\Omega)\subseteq
	M^+(\bar\Omega)$. It is not difficult to show that
	$\mathrm{BP}(\bar\Omega)=\mathrm{Core}^\star(\bar\Omega))$
	(\cite{5}, Lemma 2.3). The claim is proven.
\end{proof}

Recall that we denote by $m_{d,r}$ the probability measure, generated on a ball $B_r(\by)$ by Lebesgue measure 
$\mathrm{mes}_d$ (see definition \eqref{dfmdr})  and by $\mathrm{BP}_{eq}(\by,\,r)$ we denote the set of all measures from
the base polyhedron $\mathrm{BP}(\bar B_r(\by))$ which are equivalent to  
$\mathrm{mes}_d$. Consider on $\bar B_r(\by)$  the normalized harmonic capacity
$\mathrm{cap}_n(A)=\frac{\mathrm{cap}(A)}{\mathrm{cap}(\bar
	B_r(\by))}\,(A\in\Sigma_B(\bar B_r(\by)))$.
The following claim describes a part of $\mathrm{BP}_{eq}(\by,\,r)$:

\begin{proposition}\label{lmdescribedens}
	The set $\mathrm{BP}_{eq}(\by,\,r)$ contains the set ${\mathrm
		M}_f(r,\by)$ of absolute continuous measures described in
	Definition \ref{dfMacry}.
\end{proposition}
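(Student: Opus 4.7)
The plan is to combine the isocapacity inequality \eqref{isocapineq} with the description of the core of a concave distortion of a probability measure from \cite{2}. Set $f(t)=t^{(d-2)/d}$ and note that $f\colon[0,1]\to[0,1]$ satisfies $f(0)=0$, $f(1)=1$ and is strictly concave on $[0,1]$. Consequently the set function
\begin{equation*}
\tilde{\nu}(A):=\mathrm{cap}(\bar B_r(0))\cdot f(m_{d,r}(A)),\qquad A\in\Sigma_B(\bar B_r(\by)),
\end{equation*}
is a submodular concave distortion of the probability measure $m_{d,r}$ whose total mass on $\bar B_r(\by)$ equals $\mathrm{cap}(\bar B_r(\by))$.

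First I would normalize the isocapacity inequality on $\bar B_r(\by)$. Because \eqref{isocapineq} is an equality for a closed ball, dividing by $\mathrm{cap}(\bar B_r(\by))=\mathrm{cap}(\bar B_r(0))$ (translation invariance of capacity) yields, for every Borel $A\subseteq\bar B_r(\by)$,
\begin{equation*}
\mathrm{cap}(A)\;\ge\;\mathrm{cap}(\bar B_r(0))\cdot f\big(m_{d,r}(A)\big)\;=\;\tilde{\nu}(A),
\end{equation*}
with equality for $A=\bar B_r(\by)$.

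Next I would invoke the cited result from \cite{2}, which identifies the core of $\tilde{\nu}$ (viewed as a concave distortion of $m_{d,r}$) with the collection of measures $\mu=g\cdot m_{d,r}$ whose densities $g$ lie in $\mathrm{cap}(\bar B_r(0))\cdot\overline{\mathrm{co}}\big(\{f'\circ s:s\in\mathcal{S}(\bar B_r(\by),m_{d,r})\}\big)={\mathcal Co}(\by,r)$. By Definition \ref{dfMacry}, this core is exactly $\mathrm{M}_f(\by,r)$. Combining this with the previous paragraph, for any $\mu\in\mathrm{M}_f(\by,r)$ and every $A\in\Sigma_B(\bar B_r(\by))$,
\begin{equation*}
\mu(A)\le\tilde{\nu}(A)\le\mathrm{cap}(A),\qquad \mu(\bar B_r(\by))=\tilde{\nu}(\bar B_r(\by))=\mathrm{cap}(\bar B_r(\by)),
\end{equation*}
so $\mu\in\mathrm{BP}(\bar B_r(\by))$ by definition \eqref{defcore}.

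To upgrade to $\mathrm{BP}_{eq}(\by,r)$, I would check that the density is bounded below by a positive constant. Since $f'(t)=\tfrac{d-2}{d}\,t^{-2/d}\ge\tfrac{d-2}{d}$ on $(0,1]$ and any measure-preserving $s$ into $[0,1]$ takes values in $(0,1]$ almost everywhere (the preimage of $\{0\}$ has $m_{d,r}$-measure zero), each generator $f'\circ s$ satisfies $f'\circ s\ge\tfrac{d-2}{d}$ a.e. This lower bound is preserved by finite convex combinations and, via an a.e.\ convergent subsequence, by the $L_1(\bar B_r(\by),m_{d,r})$-closure; hence every $g\in{\mathcal Co}(\by,r)$ obeys $g\ge\mathrm{cap}(\bar B_r(0))\cdot\tfrac{d-2}{d}>0$ a.e., making $\mu=g\cdot m_{d,r}$ equivalent to Lebesgue measure. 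The main obstacle will be extracting and applying the exact Carlier--Dana style statement from \cite{2} for the core of the concave distortion $\tilde{\nu}$; once that identification is in hand, the remaining steps are direct consequences of the isocapacity inequality and elementary properties of $f$.
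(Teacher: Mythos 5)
Your proposal follows essentially the same route as the paper: normalize the isocapacity inequality to get $f(m_{d,r}(A))\le\mathrm{cap}_n(A)$, invoke Carlier--Dana \cite{2} to describe the relevant set of measures for the distortion $f(m_{d,r})$ as those with densities in $\mathcal{Co}(\by,r)$, conclude the containment in $\mathrm{BP}(\by,r)$, and finish with the lower bound $f'\ge(d-2)/d$ to get equivalence with Lebesgue measure. The one point you flagged as an obstacle is exactly where the paper spends its care: Theorem~2 of \cite{2} describes the \emph{core} of the \emph{convex} distortion $f^\star(m_{d,r})$ with $f^\star(t)=1-f(1-t)$, giving densities of the form $(f^\star)'\circ s$; the paper identifies $\mathrm{core}(f^\star(m_{d,r}))$ with the base polyhedron of the concave distortion $f(m_{d,r})$ and then observes that, since $(f^\star)'(t)=f'(1-t)$ and $s\mapsto 1-s$ is a measure-preserving bijection of $\mathcal{S}(\bar B_r(\by),m_{d,r})$ onto itself, the density set $\{(f^\star)'\circ s\}$ coincides with $\{f'\circ s\}$. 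That short dualization-and-substitution argument is what justifies quoting the description in the form you wrote; with it supplied, your proof is complete and matches the paper's.
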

\begin{proof}
	Since the isocapacity inequality 
	\begin{equation*}
	\mathrm{mes}_d(A)\le
	c_d\,(\mathrm{cap}(A))^{d/(d-2)}\quad (A\in\Sigma_B(\bar B_r(\by)))
	\end{equation*}
	 comes as identity for $A=\bar
	B_r(\by)$, it is equivalent to
\begin{equation}\label{equivtoisocap}
	f(m_{d,r}(A))\le\mathrm{cap}_n(A)\quad.
	\end{equation}
	Notice that the function $f(t)=t^{(d-2)/d}$ is increasing, concave and $f(1)=1$. Then the set function $f(m_{d,r})$ is a concave distortion of the probability
	measure $m_{d,r}$, hence it is submodular (\cite{2},
	\cite{10}). Therefore the conjugate to it $f^\star(m_{d,r})$ with
	$f^\star(t)=1-f(1-t)$ is a convex distortion of $m_{d,r}$, hence it is
	supermodular. Consider the core of $f^\star(m_{d,r})$, i.e., the
	following collection of  additive probability
	measures: $\mathrm{core}\big(f^\star(m_{d,r})\big)=\{P:\,P(A)\ge
	f^\star(m_{d,r}(A)),\;\forall\,A\in\Sigma_B(\bar B_r(\by))\}$, which coincides with
	the base polyhedron of $f(m_{d,r})\;$ $\mathrm{BP}(f(m_{d,r}))=\{P:\,P(A)\le
	f(m_{d,r}(A)),\;\forall\,A\in\Sigma_B(\bar B_r(\by))\}$ (\cite{5}). Hence in view
	of \eqref{equivtoisocap} and definition \eqref{defcore} with $\Omega=B_r(\by)$,
	\begin{equation}\label{inclinpolyhed}
	\mathrm{cap}(\bar
	B_r(0))\cdot\mathrm{core}\big(f^\star(m_{d,r})\big)\subseteq\mathrm{BP}(\by,r).
	\end{equation}
	 	On the other hand,
	since the Lebesque measure is non-atomic, we can use Theorem 2 of \cite{2}, which says
	that $\mathrm{core}\big(f^\star(m_{d,r})\big)$ consists of absolutely continuous measures on $\bar B_r(\by)$, whose densities run run over 
	the set 
	\begin{equation}\label{descrcorestar}
	\overline{\mathrm{co}}\Big(\{(f^\star)^\prime\,\circ\,s:\,s\in\mathcal{S}(\bar
	B_r(\by),\,m_{d,r})\}\Big).
	\end{equation}
	We have: $(f^\star)^\prime(t)=f^\prime(j(t))$, where $j(t)=1-t$.
	Since the mapping $j:\,[0,1]\rightarrow [0,1]$ is bijective and
	preserves Lebesgue measure, then the operator $J(s)=j\circ\,
	s$ maps bijectively the set $\mathcal{S}(\bar B_r(\by),\,m_{d,r})$ onto
	itself. Hence the set under the big brackets in \eqref{descrcorestar} coincides with
	$\{f^\prime\,\circ\,s:\,s\in\mathcal{S}(\bar B_r(\by),\,m_{d,r})\}$.
	This circumstance and inclusion \eqref{inclinpolyhed} imply that
	the set ${\mathcal Co}\,(\by,\,r)$, defined by \eqref{dfcalCo}, is
	contained in $\mathrm{BP}(\by,\,r)$. It is easy to show that
	${\mathcal Co}\,(\by,\,r)\subset L_1\big(B_r(\by),\,m_{d,r}\big)$.
	Furthermore, since $f^\prime(t)=((d-2)/d)t^{-2/d}$, then
	$\inf_{t\in[0,1]}f^\prime(t)>0$. Hence all the measures having the
	densities in ${\mathcal Co}\,(\by,\,r)$ are equivalent to the
	Lebesgue measure. The proposition is proven.
\end{proof}

\end{document}